\documentclass[11pt]{article}
\usepackage{latexsym,amssymb,amsmath,amsfonts,amsthm}
\usepackage{graphics}
\usepackage{graphicx}
\usepackage{mathrsfs}
\usepackage{subfigure}
\usepackage{color}
\usepackage{bm}
\topmargin =0mm \headheight=0mm \headsep=0mm
\textheight =220mm \textwidth =160mm
\oddsidemargin=0mm\evensidemargin =0mm
\sloppy \brokenpenalty=10000

\newcommand{\R}{{\mat R}}

\newcommand{\no}{\nonumber}
\newcommand{\be}{\begin{eqnarray}}
\newcommand{\ben}{\begin{eqnarray*}}
\newcommand{\en}{\end{eqnarray}}
\newcommand{\enn}{\end{eqnarray*}}

\newcommand{\pa}{\partial}

\newcommand{\ov}{\overline}
\newcommand{\I}{{\rm Im}}

\newcommand{\G}{\Gamma}

\newcommand{\Om}{\Omega}
\newcommand{\om}{\omega}

\newcommand{\mat}{\mathbb}

\newcommand{\la}{\lambda}

\newtheorem{theorem}{Theorem}[section]
\newtheorem{lemma}[theorem]{Lemma}

\newtheorem{algorithm}{Algorithm}[section]
\newtheorem{problem}[theorem]{Problem}

\begin{document}
\renewcommand{\theequation}{\arabic{section}.\arabic{equation}}
\title{\bf A direct imaging method for inverse elastic scattering by unbounded rigid rough surfaces
}
\author{Guanghui Hu\thanks{Beijing Computational Science Research Center,
Beijing 100193, China ({\tt hu@csrc.ac.cn})}
\and Xiaoli Liu\thanks{Academy of Mathematics and Systems Science, Chinese Academy of Sciences,
Beijing 100190, China and School of Mathematical Sciences, University of Chinese
Academy of Sciences, Beijing 100049, China ({\tt liuxiaoli@amss.ac.cn})}
\and Bo Zhang\thanks{LSEC, NCMIS and Academy of Mathematics and Systems Science, Chinese Academy of
Sciences, Beijing, 100190, China and School of Mathematical Sciences, University of Chinese
Academy of Sciences, Beijing 100049, China ({\tt b.zhang@amt.ac.cn})}
\and Haiwen Zhang\thanks{NCMIS and Academy of Mathematics and Systems Science, Chinese Academy of Sciences,
Beijing 100190, China ({\tt zhanghaiwen@amss.ac.cn})}}
\date{}

\maketitle

%\vspace{.2in}

\begin{abstract}
This paper is concerned with the inverse time-harmonic elastic scattering problem of recovering unbounded
rough surfaces in two dimensions. We assume that elastic plane waves with different directions are incident
onto a rigid rough surface in a half plane. The elastic scattered field is measured on a horizontal straight
line segment within a finite distance above the rough surface. A direct imaging algorithm is proposed to
recover the unbounded rough surface from the scattered near-field data, which involves only inner products
between the data. Numerical experiments are presented to show that the inversion scheme is
not only efficient but also accurate and robust with respect to noise.

\vspace{.2in}
{\bf Keywords:} Inverse elastic scattering, unbounded rough surface, \emph{Navier} equation,
Dirichlet boundary condition, near-field data.
\end{abstract}

\section{Introduction}
\setcounter{equation}{0}

In this paper, we study the inverse scattering problem of time-harmonic elastic
waves by an unbounded rough surface in two dimensions.
The region above the surface is filled with a homogeneous and isotropic elastic medium,
while the material below is assumed to be elastically rigid.
Our goal is to recover the unbounded interface from the scattered field measured on a horizontal
straight line segment at a finite distance above the rough surface.
Such kind of inverse problems occurs in many applications such as
geophysics, seismology and nondestructive testing \cite{BC-2005,LL-1986,S-1984}.
See Figure \ref{fig0} for an illustration of the scattering problem in a half plane.

\begin{figure}[htbp]
  \centering
    \includegraphics[width=3.5in]{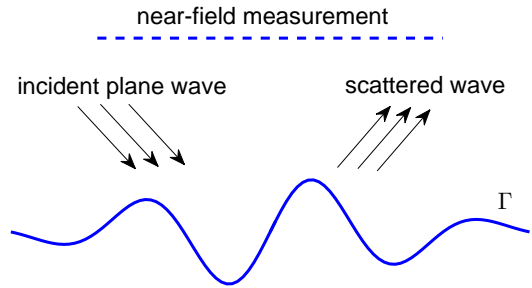}
\caption{Elastic scattering by an unbounded rigid rough surface in 2D.
}\label{fig0}
\end{figure}

Given an incident field and the rough surface, the forward problem is to determine the field distribution
of the scattered field. Uniqueness and existence of forward solutions have been investigated
in \cite{A00, Arens-2001,Arens-2002} by the integral equation method and in \cite{Hu-2012-SIAM,Hu-2015-AA}
by a variational method. These approaches extend the solvability results for grating diffraction problems
to the more challenging case of unbounded rough surfaces in elasticity. In the periodic case,
numerical methods such as the two-step optimization method \cite{Hu-2012}, the factorization method \cite{Hu-2013}
and the transformation-based near-field imaging method \cite{Zhao-2015,Zhao-2016} have been proposed to
solve the inverse elastic scattering problems for diffraction gratings.
Note that the periodic setting significantly simplifies the arguments for the case of general rough surfaces.
However, relatively little studies are carried out for inverse elastic scattering arising from global rough
surfaces. This is mainly due to the infiniteness of the scattering surface, which brings additional
difficulties not only to mathematical analysis but also to numerical computation.

In this paper, we shall propose a direct imaging method to reconstruct the unbounded rough surface from
the near-field measurement of the scattered field generated by plane elastic waves at one frequency
with multiple directions. We take inspirations from recent studies for inverse acoustic scattering problems,
for example, the orthogonality sampling method \cite{P2010}, topological derivative-based approach \cite{BBC},
the direct sampling method \cite{IJZ-2012} and the reverse time migration method \cite{CH-2013}.
Compared to iterative schemes and other sampling type methods, the features of the direct imaging method
include: (i) Capability of depicting the profile of the surface only through computing the inner products
of the measured data and a known function at each sampling point. Thus, the computational cost is very cheap.
This merit is especially important for elastic scattering problems since the computation for vectorial equations
is usually more time-consuming than scalar equations. (ii) Robustness to a fairly large amount of noise
in the measured data. This paper focuses on the unbounded rough surface identification problem
in linear elasticity and is an extension of our recent work \cite{LZZ} on near-field imaging penetrable
interfaces modelled by the Helmholtz equation to the {\em Navier} equation.
The elasticity problem takes a more complicated form than the acoustic case, due to the coexistence of
the compressional and shear waves that propagate at different speeds. We consider only impenetrable rigid
rough surfaces (on which the elastic displacement vanishes), but our method applies naturally to interfaces
with other boundary or transmission conditions.

Our imaging scheme relies essentially on a relation between the Funk-Hecke formula and the free-space Green's
tensor for the {\em Navier} equation; see (\ref{impi-1}) or (\ref{ImP}).
Motivated by this relation, we present the scattered field in the form of a superposition
of incident elastic plane waves (see Theorem \ref{mainthm} below).
This expression of the scattered field will be proven to have the same decaying property as the imaginary
part of the Green's tensor, as the sampling point moves away from the scattering surface.
This yields our indicator function using plane elastic waves with different directions in a half plane.
Numerical experiments are presented to show the effectiveness of our method.
Further, we investigate the effect of the reconstructed results from the parameters such as the incident frequency, the measurement place and the noise level. Numerics show that our imaging algorithm
is fast, accurate and very robust with respect to the noisy data.

The remainder of this paper is organized as follows. In Section \ref{sec2}, we briefly review the well-posedness
of the forward scattering problem using the integral equation method.
Section \ref{sec3} is devoted to an analysis of our imaging function and a description of
the direct imaging algorithm. Numerical experiments are carried out in Section \ref{sec4} to demonstrate the
effectiveness of the proposed approach.

\section{Well-posedness of the forward scattering problem}\label{sec2}
\setcounter{equation}{0}

In the section we review existence and uniqueness of solutions to elastic scattering from
rigid rough surfaces in two dimensions. The solvability of the second kind integral operator
established in \cite{Arens-2002} will be used to analyze our indicator function to be proposed
in Section \ref{sec3}.

Consider a one-dimensional unbounded rough surface
$\G=\{x\in \R^2~|~x_2=f(x_1),x_1\in \R\}$, where $\G$ is supposed to be smooth enough such that
$f \in BC^{1, 1}(\mathbb{R})$.
Here, $BC^{1, 1}(\mathbb{R}):=\{\varphi\in BC(\R)~|~\varphi'\in BC(\R),j=1,2\}$
under the norm $\|\varphi\|_{1,\R}:=\|\varphi\|_{\infty,\R}+\|\varphi'\|_{\infty,\R}$
and $BC(\R)$ is the set of bounded and continuous functions in $\R$.
Denote the region above $\G$ by $\Om$. Assume that $\Om$ is filled with an isotropic homogeneous elastic medium
characterized by the Lam\'e constants $\mu, \lambda$ with $\mu >0, \lambda + \mu \geq 0$. For simplicity,
we assume that the density function in $\Omega$ is normalized to be one and the region below $\Gamma$ is
a rigid elastic body. Assume that a time-harmonic plane wave (with time variation of the form
exp($-i\om t$), $\om>0$) is incident onto $\Gamma$ from $\Omega$. The incident plane wave ${\bm u}^{in}$
can be either the pressure wave
\ben
{\bm u}^{in}_p(x;{\bm d}):={\bm d}e^{ik_p{\bm d} \cdot x},
\enn
or the shear wave
\ben
{\bm u}^{in}_s(x;{\bm d}):={\bm d^\perp}e^{ik_s{\bm d} \cdot x},
\enn
where ${\bm d}=(d_1,d_2)^T \in \mathbb{S}:=\{x=(x_1,x_2)~|~|x|=1\}$ is the incident direction and
${\bm d}^\perp=(-d_2,d_1)^T$.
The compressional wave number $k_p$ and the shear wave number $k_s$ are given by
\ben
k_p={\om}/{\sqrt{\lambda+2\mu}},\quad k_s= {\om}/{\sqrt{\mu}}.
\enn
The displacement of the total field ${\bm u}=(u_1,u_2)^T$ is then governed by the \emph{Navier} equation
\begin{equation}\label{navier}
\mu \Delta {\bm u}+(\lambda + \mu)\nabla\nabla\cdot{\bm u}+\om^2{\bm u}=0\quad\text{in}\quad\Om,
\end{equation}
together with the Dirichlet boundary condition
\be\label{bc}
{\bm u}=0\quad\text{on}\quad \G.
\en
Given a curve $\Lambda \subset \mathbb{R}^2$ with the unit normal $\bf n\in \mathbb{S}$,
the \emph{generalised stress} operator ${\bm P}$ on $\Lambda$ is defined by
\ben
{\bm P}{\bm u}:=(\mu+\tilde{\mu})\frac{\pa{\bm u}}{\pa\bm n}+\tilde{\lambda}{\bm n}\nabla\cdot{\bm u}
- \tilde{\mu}{\bm n}^{\perp}\nabla^{\perp}\cdot {\bm u},\quad \nabla^{\perp}:=(-\partial_2,\partial_1).
\enn
Here, $\tilde{\mu},\tilde{\lambda}$ are real numbers satisfying $\tilde{\mu}+\tilde{\lambda}=\mu+\lambda$.
A special choice of $\tilde{\mu}$ and $\tilde{\lambda}$ with $\tilde{\mu}=\mu(\mu+\lambda)/(3\mu+\lambda)$ and
$\tilde{\lambda}=(2\mu+\lambda)(\mu+\lambda)/(3\mu+\lambda)$ will be used in this paper;
see \cite[Chapter 3]{A00} for details.

Let $\G_a:=\{x=(x_1,x_2)~|~x_2=a\}$ and $U_a:=\{x=(x_1,x_2) ~|~ x_2>a\}$. In this paper we require
the scattered field ${\bm u}^{sc}={\bm u-\bm u}^{in}$ to fulfill the
{\em Upwards Propagating Radiation Condition (UPRC)} (see \cite{Arens-2001}):
\be\label{uprc}
{\bm u}^{sc}(x)=\int_{\G_a}{\bm P}_y[\Pi_a(x,y)]\phi(y)ds(y),\quad x\in U_a
\en
for some $a>f_+:=\sup\limits_{x_1\in\R}f(x_1)$ with some $\phi \in [L^{\infty}(\G_a)]^2$.
Here, $\Pi_a(x,y)$ denotes the Green's tensor for the {\em Navier} equation in the half
plane $x_2>a$ with the homogeneous Dirichlet boundary condition on $\G_a$.
The differential operator ${\bm P}_y[\Pi_a(x,y)]$ is understood as the action of
the \emph{generalised stress} operator ${\bm P}$ to each column of
$\Pi_a(x,y)$ with respect to the argument $y$.
The explicit expression of $\Pi_a(x,y)$ can be found in \cite{Arens-2001}.
Below we formulate the forward elastic scattering problem as a boundary value problem.

\begin{problem} \label{pb1}
Given ${\bm g}\in [BC(\G)\cap H^{1/2}_{loc}(\G)]^2$, find a vector field
$\bm{u}\in [C^2(\Om)\cap C(\ov{\Omega})\cap H^1_{loc}(\Omega)]^2$ that satisfies
\begin{enumerate}
\item
the Navier equation (\ref{navier}) in $\Om$,
\item
the Dirichlet boundary condition $\bm {u = g}$ on $\G$,
\item
the vertical growth rate condition:
$
\sup\limits_{{x} \in \Omega}|x_2|^\beta|{\bm u}{(x)}|<\infty
$
for some $\beta \in \mathbb{R}$ ,
\item
the UPRC (\ref{uprc}).
\end{enumerate}
\end{problem}

Well-posedness of the forward elastic scattering of plane waves in 2D was investigated
in \cite{A00,Arens-2001,Arens-2002} using the integral equation method and in \cite{Hu-2015-AA}
using a variational aproach in weighted Sobolev spaces.
In particular, the unique solution to Problem \ref{pb1} can be written in the form of a combined
single- and double-layer potential (see \cite{Arens-2001,Arens-2002})
\ben
{\bm u}(x)=\int_{\G}\left\{{\bm P}_y[\Pi_h(x,y)]-i\eta\Pi_{h}({x,y})\right\}{\bm{\varphi}}({y})ds({y}),\quad{x}\in\Om,
\enn
where $h<\inf\limits_{x_1\in\R}f(x_1)$, $\eta$ is a complex number satisfying $\Re(\eta)>0$ and the density
function ${\bm\varphi}\in[BC(\G)\cap H^{1/2}_{loc}(\G)]^2$ is the unique solution to the boundary integral equation
\ben
({\bm I}+{\bm D} - i \eta {\bm S}){\bm\varphi}(y)=2{\bm u}^{in}(y), \quad y \in \G.
\enn
Note that the boundary integral operators ${\bm S}$ and ${\bm D}$ are defined, respectively, as
\ben
{\bm {S \varphi}}(y):= 2 \int_{\G}\Pi_{h}(y,\xi) {\bm \varphi}(\xi)d\xi, \quad
{\bm {D \varphi}}(y):= 2 \int_{\G}{\bm P}_y[\Pi_h(y,\xi)] {\bm \varphi}(\xi)d\xi.
\enn
It was verified in \cite{Arens-2002} that the operator ${\bm I}+{\bm D}-i\eta{\bm S}$ is bijective on $[BC(\G)]^2$.
Further, it holds that
\ben
\|({\bm I}+{\bm D} - i \eta {\bm S})^{-1}\|< \infty.
\enn
We summarize the well-posedness of Problem \ref{pb1} in the following theorem.

\begin{theorem}\label{thmwp} (see \cite[Theorem 5.24]{A00})
For any Dirichlet data ${\bm g}\in [BC(\G)\cap H^{1/2}_{loc}(\G)]^2 $, there exists a unique solution
${\bm u}\in [C^2(\Om)\cap C(\ov{\Om})\cap H^1_{loc}(\Om)]^2$ to Problem \ref{pb1}, which depends continuously
on $\|{\bm g}\|_{\infty;\G}$, uniformly in $[C(\ov\Om\setminus U_a)]^2$ for any $a>f_+$.
\end{theorem}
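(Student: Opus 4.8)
The plan is to establish Theorem~\ref{thmwp} by the boundary integral equation method, following the layer-potential framework of \cite{A00,Arens-2001,Arens-2002}. First I would seek the solution in the form of the combined single- and double-layer potential
\begin{equation*}
{\bm u}(x)=\int_{\G}\left\{{\bm P}_y[\Pi_h(x,y)]-i\eta\,\Pi_{h}(x,y)\right\}{\bm{\varphi}}(y)\,ds(y),\quad x\in\Om,
\end{equation*}
with an unknown density ${\bm\varphi}\in[BC(\G)]^2$, where $h<\inf_{x_1\in\R}f(x_1)$ and $\Re(\eta)>0$. Because $\Pi_h(x,y)$, the half-plane Green's tensor satisfying the homogeneous Dirichlet condition on the line $x_2=h$, is singular only at $x=y$, has columns solving the \emph{Navier} equation away from that singularity, and obeys the UPRC in its first argument by construction, this ansatz automatically produces a field that is $C^2$ in $\Om$, satisfies (\ref{navier}), fulfils the UPRC (\ref{uprc}) and respects the vertical growth-rate condition; all of these properties are inherited directly from the mapping properties of $\Pi_h$ established in \cite{Arens-2001}. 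Hence only the Dirichlet condition ${\bm u}={\bm g}$ on $\G$ remains to be enforced.

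Second, I would let the field point approach $\G$ from within $\Om$ and apply the standard jump relations for the elastic single- and double-layer potentials (the single-layer part being continuous across $\G$ and the double-layer part contributing the $\tfrac12$-jump). Enforcing ${\bm u}={\bm g}$ on $\G$ and clearing the factor of $\tfrac12$ then converts the boundary value problem into the second-kind boundary integral equation
\begin{equation*}
({\bm I}+{\bm D}-i\eta\,{\bm S}){\bm\varphi}=2{\bm g}\quad\text{on }\G,
\end{equation*}
with ${\bm S}$ and ${\bm D}$ the operators introduced above. The coupling $-i\eta$ with $\Re(\eta)>0$ is chosen precisely so that the homogeneous equation admits only the trivial density, preparing the ground for injectivity.

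Third, I would invoke the central solvability result of \cite{Arens-2002}: the operator ${\bm I}+{\bm D}-i\eta\,{\bm S}$ is a bijection on $[BC(\G)]^2$ with bounded inverse, $\|({\bm I}+{\bm D}-i\eta\,{\bm S})^{-1}\|<\infty$. Surjectivity yields, for every ${\bm g}\in[BC(\G)\cap H^{1/2}_{loc}(\G)]^2$, the density ${\bm\varphi}=2({\bm I}+{\bm D}-i\eta\,{\bm S})^{-1}{\bm g}$, whose potential solves Problem~\ref{pb1}; this gives \emph{existence}. Injectivity, combined with the fact that any solution of Problem~\ref{pb1} admits such a layer representation (equivalently, the direct uniqueness theorem for the rough-surface problem in \cite{Arens-2001}), gives \emph{uniqueness}. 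For \emph{continuous dependence}, the bounded inverse yields $\|{\bm\varphi}\|_{\infty;\G}\le 2\,\|({\bm I}+{\bm D}-i\eta\,{\bm S})^{-1}\|\,\|{\bm g}\|_{\infty;\G}$, and composing this with the boundedness of the potential map ${\bm\varphi}\mapsto{\bm u}$ from $[BC(\G)]^2$ into $[C(\ov\Om\setminus U_a)]^2$ (for fixed $a>f_+$, using the decay of $\Pi_h$ and ${\bm P}_y[\Pi_h]$ away from $\G$) delivers the stated uniform estimate.

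The main obstacle is the non-compactness of ${\bm S}$ and ${\bm D}$ forced by the unbounded surface $\G$: the classical Riesz--Fredholm theory available for bounded obstacles does not apply, so one cannot simply pass from injectivity to bijectivity. This is exactly the difficulty resolved in \cite{Arens-2002} through the theory of integral operators on unbounded domains in $BC$ spaces, based on decay estimates for the kernels and collectively compact / operator-limit arguments, which is why I would cite the bijectivity and the bound on the inverse rather than reprove them. The only genuinely new step on my side is verifying that an arbitrary solution of Problem~\ref{pb1} is representable as the above potential, so that uniqueness for the integral equation transfers to uniqueness for the boundary value problem.
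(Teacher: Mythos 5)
Your proposal is correct and follows essentially the same route as the paper, which does not prove Theorem~\ref{thmwp} itself but cites \cite[Theorem 5.24]{A00} after sketching exactly this framework: the combined single- and double-layer ansatz with the half-plane Green's tensor $\Pi_h$, reduction to the second-kind equation $({\bm I}+{\bm D}-i\eta{\bm S}){\bm\varphi}=2{\bm g}$, and the bijectivity with bounded inverse on $[BC(\G)]^2$ from \cite{Arens-2002}. Your added remarks on why Riesz--Fredholm theory fails on the unbounded surface and on representability of an arbitrary solution (uniqueness via \cite{Arens-2001}) correctly identify the points that the cited references, rather than the paper, actually establish.
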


\section{The direct imaging method}\label{sec3}
\setcounter{equation}{0}

Introduce the notation
\ben
&& \G_{a,A}:=\{x\in \Om~|~x_2=a,|x_1|\leq A\},\, a>f_+.\\
&&\mathbb{S}_+:=\left\{{\bm d}=(d_1,d_2)^T~|~|{\bm d}|=1,d_2>0\right\},\\
&&\mathbb{S}_-:=\left\{{\bm d}=(d_1,d_2)^T~|~|{\bm d}|=1,d_2<0\right\}.
\enn
The purpose of this section is to propose a direct imaging scheme for
determining  $\G$ from
the scattered near-field data $\left\{{\bm u}^{sc}(x;{\bm d})~|~x\in\G_{a,A},{\bm d}\in \mathbb{S}_-\right\}$
incited by elastic plane waves ${\bm u}_p^{sc}(x;{\bm d})$ and ${\bm u}_s^{sc}(x;{\bm d})$.

We begin with the free-field Green's tensor $\Pi({x,y})$ for the two-dimensional {\em Navier} equation,
given by
\be \label{Pi}
\Pi({x,z}):=\frac{1}{\mu} {\bm I }\, \varPhi_{k_s}({x,z})+
\frac{1}{\omega^2} \nabla^T_x \nabla_x \left(\varPhi_{k_s}({x,z})-\varPhi_{k_p}({x,z}) \right),
\en
with ${x,z} \in \mathbb{R}^2$ and ${x \neq z}$. Here, ${\bm I }$ denotes the 2-by-2 unit matrix and  the scalar function $\varPhi_{k}({x,z})$ is the fundamental solution to the two-dimensional {\em Helmholtz} equation given by
\be\label{phii}
\varPhi_k({x,z})=\frac{i}{4}H^{(1)}_0(k|{x-z}|), \quad {x \not= z},
\en
where $H^{(1)}_0:=J_0+i Y_0$ is the Hankel function of the first kind of order zero. The functions $J_n$ and $Y_n$ are the Bessel and Neumann functions of order $n$, respectively.
To derive our indicator function, we need
 the following {\em Funk-Hecke} formula (see e.g., \cite{colton-kress-2013}):
\begin{lemma}
For any $k>0$, we have
\ben
\frac{1}{2\pi}\int_{\mathbb{S}}e^{ik({ x-z})\cdot {\bm d}}ds({\bm d})= J_0(k|{x-z}|),
\quad x,z\in \R^2.
\enn
\end{lemma}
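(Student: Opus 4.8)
The plan is to reduce the surface integral over the unit circle $\mathbb{S}$ to an elementary one-dimensional integral and then identify it with the standard power-series definition of $J_0$. First I would fix $x,z\in\R^2$ and set $r:=x-z$, write $\rho:=|r|$, and let $\phi$ denote the polar angle of $r$ (the case $r=0$ being trivial since both sides equal $1$). Parametrizing the circle by $\mathbf{d}=(\cos\theta,\sin\theta)^T$ with $\theta\in[0,2\pi]$ and $ds(\mathbf{d})=d\theta$, I have $r\cdot\mathbf{d}=\rho\cos(\theta-\phi)$, so the left-hand side becomes
\ben
\frac{1}{2\pi}\int_0^{2\pi}e^{ik\rho\cos(\theta-\phi)}\,d\theta.
\enn
Since the integrand is $2\pi$-periodic in $\theta$, the substitution $\theta\mapsto\theta+\phi$ removes the dependence on $\phi$, leaving $\frac{1}{2\pi}\int_0^{2\pi}e^{ik\rho\cos\theta}\,d\theta$. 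This reduction is the only geometric content; the remaining work is to recognize this quantity as $J_0(k\rho)$.

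Next I would dispose of the imaginary part: under $\theta\mapsto\theta+\pi$ one has $\cos\theta\mapsto-\cos\theta$, so $\int_0^{2\pi}\sin(k\rho\cos\theta)\,d\theta=0$ and the integral equals $\frac{1}{2\pi}\int_0^{2\pi}\cos(k\rho\cos\theta)\,d\theta$. I would then expand the cosine in its (everywhere convergent) Taylor series and integrate term by term, which gives
\ben
\frac{1}{2\pi}\int_0^{2\pi}\cos(k\rho\cos\theta)\,d\theta
=\sum_{m=0}^{\infty}\frac{(-1)^m(k\rho)^{2m}}{(2m)!}\cdot\frac{1}{2\pi}\int_0^{2\pi}\cos^{2m}\theta\,d\theta.
\enn
Using the classical Wallis-type evaluation $\frac{1}{2\pi}\int_0^{2\pi}\cos^{2m}\theta\,d\theta=\binom{2m}{m}/4^{m}=(2m)!/(4^m(m!)^2)$, the factorials cancel and the sum collapses to $\sum_{m=0}^{\infty}\frac{(-1)^m}{(m!)^2}(k\rho/2)^{2m}$, which is exactly the series definition of $J_0(k\rho)=J_0(k|x-z|)$. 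This completes the identification.

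The only point requiring care is the interchange of summation and integration; I expect this to be the main (though mild) technical obstacle. It is justified because the partial sums of $\cos(k\rho\cos\theta)$ converge uniformly in $\theta$ on the compact interval $[0,2\pi]$ for each fixed $k\rho$, so dominated convergence (or uniform convergence of a series of continuous functions on a bounded interval) applies. Alternatively, one may bypass both the series manipulation and this convergence issue by simply invoking the standard Jacobi–Anger expansion $e^{it\cos\theta}=\sum_{n\in\Z}i^nJ_n(t)e^{in\theta}$ from \cite{colton-kress-2013} and integrating over $\theta$, whereupon orthogonality of $\{e^{in\theta}\}$ kills all terms except $n=0$ and yields $J_0(t)$ directly; I would present the elementary series proof as primary and mention this shortcut as a remark.
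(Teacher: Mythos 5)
Your proof is correct. Note, however, that the paper does not prove this lemma at all: it is stated as the known Funk--Hecke formula with a citation to \cite{colton-kress-2013}, so there is no internal argument to compare against. What you have produced is a self-contained elementary verification, and every step checks out: the reduction $r\cdot{\bm d}=\rho\cos(\theta-\phi)$ and the rotation-invariance of the integral are right; the vanishing of $\int_0^{2\pi}\sin(k\rho\cos\theta)\,d\theta$ under $\theta\mapsto\theta+\pi$ is right; the Wallis evaluation $\frac{1}{2\pi}\int_0^{2\pi}\cos^{2m}\theta\,d\theta=(2m)!/(4^m(m!)^2)$ is right; and the resulting series $\sum_{m\ge 0}\frac{(-1)^m}{(m!)^2}(k\rho/2)^{2m}$ is exactly the power series for $J_0$ quoted later in the paper (the $n=0$ case of the $J_n$ expansion in Section 3). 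The interchange of sum and integral is legitimately justified by the Weierstrass $M$-test, since $|(-1)^m(k\rho\cos\theta)^{2m}/(2m)!|\le (k\rho)^{2m}/(2m)!$ uniformly in $\theta$. Your closing remark that the Jacobi--Anger expansion $e^{it\cos\theta}=\sum_{n\in\Z}i^nJ_n(t)e^{in\theta}$ gives a one-line shortcut is essentially the route implicit in the cited reference; what your primary argument buys over that citation is independence from any Bessel-function machinery beyond the defining series, at the modest cost of the Wallis computation.
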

Combining the above lemma with (\ref{phii}), we obtain
\be\label{3}
\text{Im}\left(\varPhi_k(x,z)\right)=\frac{1}{4}J_0(k|{x-z|})
=\frac{1}{8\pi}\int_{\mathbb{S}}e^{ik({x-z})\cdot {\bm d}}ds({\bm d}).
\en
Taking the imaginary part of (\ref{Pi}) and using (\ref{3}) yield
\be\label{impi-1}
\text{Im}~\Pi({x,z})=\frac{1}{8\pi}\left[\frac{1}{\la+2\mu}\int_{\mathbb{S}}{\bm d}\otimes{\bm d}e^{ik_p({x-z})\cdot
{\bm d}}ds({\bm d})+\frac{1}{\mu}\int_{\mathbb{S}}({\bm I}-{\bm d}\otimes {\bm d})e^{ik_s({x-z})\cdot
{\bm d}}ds({\bm d})\right],
\en
where
\ben
{\bm d}\otimes {\bm d}:= {\bm d}{\bm d}^T = \left[
\begin{matrix}
d_1^2&d_1d_2\\
d_2d_1&d_2^2
\end{matrix}
\right],\quad {\bm d}=(d_1,d_2)^T\in\mathbb{S}.
\enn
Set ${\bm e}_1=(1,0)^{T}$ and ${\bm e}_2=(0,1)^{T}$. Then, for $j=1,2$,
\be \label{impi}
\I(\Pi({x,z}){\bm e}_j)=\frac{1}{8\pi}\left[\frac{1}{\la+2\mu}\int_{\mathbb{S}}d_j{\bm d}e^{ik_p({x-z})
\cdot{\bm d}}ds({\bm d})+\frac{1}{\mu}\int_{\mathbb{S}}d_j^{\perp} {\bm d}^{\perp}e^{ik_s({x-z})\cdot
{\bm d}}ds({\bm d})\right],
\en
where $d_j^{\perp}$ stands for the $j$-th component of ${\bm d}^{\perp}$, that is, $d_1^{\perp}=-d_2$,
$d_2^{\perp}=d_1$.

\begin{lemma}\label{lemma1}
For ${\bm d}\in \overline{\mathbb{S}}_+$, the scattered fields corresponding
to the incident plane waves ${\bm u}^{in}_{p}(x;{\bm d})$ and ${\bm u}^{in}_{s}(x;{\bm d})$
are given as ${\bm u}^{sc}_{p,+}(x;{\bm d}):=-{\bm d}e^{i{k_p}x\cdot {\bm d}}$
and ${\bm u}^{sc}_{s,+}(x;{\bm d}):=-{\bm d}^{\perp}e^{i{k_s}x\cdot {\bm d}}, x\in \Om$,
respectively.
\end{lemma}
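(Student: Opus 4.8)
The plan is to recognise that the claimed scattered fields are nothing but the negatives of the incident fields, and to prove this by verifying that $\bm{v}_p(x):=-\bm{d}\,e^{ik_p x\cdot\bm{d}}$ and $\bm{v}_s(x):=-\bm{d}^{\perp}e^{ik_s x\cdot\bm{d}}$ each solve the boundary value problem Problem \ref{pb1} with the scattering datum, after which the uniqueness asserted in Theorem \ref{thmwp} identifies them with the true scattered fields. Since $\bm{v}_p=-\bm{u}^{in}_p$ and $\bm{v}_s=-\bm{u}^{in}_s$ hold identically on $\Omega$, the corresponding total fields $\bm{u}=\bm{u}^{in}+\bm{v}$ vanish; so the assertion is equivalent to showing that the identically zero field is the unique total field excited by these upward-going incident waves.

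First I would dispose of the three elementary requirements of Problem \ref{pb1}. Each $\bm{v}$ is a constant vector multiplied by a plane wave and therefore, like the incident field it negates, satisfies the Navier equation (\ref{navier}) in $\Omega$; a direct computation shows $\bm{u}^{in}_p$ is curl-free and $\bm{u}^{in}_s$ is divergence-free, so they are genuine pressure and shear waves with wave numbers $k_p$ and $k_s$ respectively. The scattered-field Dirichlet datum is $\bm{g}=-\bm{u}^{in}|_{\G}$, which $\bm{v}=-\bm{u}^{in}$ meets on $\G$ trivially, and the vertical growth condition holds with $\beta=0$ because $|\bm{v}|\equiv 1$ is bounded on $\Omega$.

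The crux is the UPRC (\ref{uprc}), and here I would invoke the angular-spectrum characterisation of the radiation condition from \cite{Arens-2001}: a field in $U_a$ satisfies the UPRC precisely when, in its Fourier representation over the horizontal variable, only upward-propagating and evanescent-decaying modes occur. For $\bm{d}=(d_1,d_2)^T\in\overline{\mathbb{S}}_+$ the scalar factor $e^{ik_p\bm{d}\cdot x}$ has horizontal wave number $\xi=k_p d_1\in[-k_p,k_p]$ and vertical wave number $\sqrt{k_p^2-\xi^2}=k_p d_2\geq 0$, so it is exactly one upward-propagating mode; the same holds for the shear factor with $k_s$. Treating the pressure and shear components separately and applying this characterisation shows that $\bm{v}_p$ and $\bm{v}_s$ each satisfy the UPRC. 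All four conditions of Problem \ref{pb1} being verified, uniqueness forces the scattered fields to coincide with $\bm{v}_p$ and $\bm{v}_s$.

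The main obstacle is the UPRC step, and within it the grazing boundary case $d_2=0$ (that is, $\bm{d}=(\pm 1,0)^T$), where the vertical wave number degenerates to zero and the plane wave neither propagates upward nor decays, so it sits on the edge of the admissible class. One must confirm that this limiting direction is still represented by the integral (\ref{uprc}), either by passing to the limit from $d_2>0$ using the continuity of the solution operator in Theorem \ref{thmwp}, or by verifying directly that the horizontally travelling wave is reproduced through the half-plane Dirichlet Green's tensor $\Pi_a$. The vectorial structure of $\Pi_a$ and of the generalised stress operator $\bm{P}$ makes the direct check more delicate than in the scalar Helmholtz case, so the cleaner route is likely the limiting argument from the strictly upward directions.
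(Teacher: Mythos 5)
Your proposal is correct and follows essentially the same route as the paper: verify that $-\bm{u}^{in}_p$ and $-\bm{u}^{in}_s$ satisfy the Navier equation, the vertical growth condition, the Dirichlet boundary data and the UPRC, and then invoke the uniqueness of Theorem \ref{thmwp}. The only difference is that the paper disposes of the UPRC step (including the grazing directions with $d_2=0$) simply by citing \cite[Remark 2.14]{A00}, which is exactly the upward-propagating plane-wave fact you establish in detail via the angular-spectrum characterisation.
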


\begin{proof}
It is easily seen that ${\bm u}^{sc}_{p,+}(x;{\bm d})$  satisfies the {\em Navier} equation (\ref{navier})
and the vertical growth rate condition defined in Problem \ref{pb1}. Since ${\bf u}^{sc}_{p,+}(x;{\bf d})$
is upward propagating for ${\bm d}\in \overline{\mathbb{S}}_+$, it satisfies
the UPRC in (\ref{uprc}); see \cite[Remark 2.14]{A00}. Further, we have the boundary data
${\bm u}^{sc}_{p,+}(x;{\bm d})=-{\bm u}^{in}_{p}(x;{\bm d})$ on $\G$. By the uniqueness of solutions to
the forward scattering problem (see Theorem \ref{thmwp}), we conclude that the function
${\bm u}^{sc}_{p,+}(x;{\bm d}):=-{\bm d}e^{i{k_p}x\cdot{\bm d}},\;x\in\Om$, is the unique scattered
field corresponding to the incident plane wave ${\bm u}^{in}_{p}(x;{\bm d})$.
The shear wave case can be proved similarly.
\end{proof}

Denote by ${\bm u}^{sc}_{p,-}(x;{\bm d}),\;{\bm u}^{sc}_{s,-}(x;{\bm d})\in[C^2(\Om)\cap C(\ov{\Om})\cap H^1_{loc}(\Om)]^2$
the unique scattered field corresponding to the incident plane waves ${\bm u}^{in}_p(x; {\bm d})$ and
${\bm u}^{in}_s(x; {\bm d})$ with ${\bm d}\in\mathbb{S}_-$, respectively.
Introduce new incident waves of the form
\be\label{uin}
{\bm U}^{in}~({ x;z},{\bm e}_j):=\text{Im}~(\Pi({ x,z}){\bm e}_j),\quad j=1,2,
\en
for $x, z\in\Omega$, $x\neq z$.
Next we shall express the scattered field incited by ${\bm U}^{in}~({ x;z},{\bm e}_j)$ in terms of the functions
${\bm u}^{sc}_{p,-}$ and ${\bm u}^{sc}_{s,-}$.

\begin{theorem}\label{mainthm}
The scattered field generated by ${\bm U}^{in}~({ x;z},{\bm e}_j)$
 takes the form
\begin{align}\no
{\bm U}^{sc}~({ x;z},{\bf e}_j)&=\frac{1}{8\pi}\left[\frac{1}{\lambda+2\mu}\int_{\mathbb{S}_-}
{\bm u}^{sc}_{p,-}(x;{\bm d})d_je^{-ik_pz\cdot {\bm d}}ds({\bm d})+\frac{1}{\mu}\int_{\mathbb{S}_-}
{\bm u}^{sc}_{s,-}(x;{\bm d})d_j^{\perp}e^{-ik_s{\bf z}\cdot {\bm d}}ds({\bm d})\right]\\ \label{usc}
&-\frac{1}{8\pi}\left[\frac{1}{\la+2\mu}\int_{\mathbb{S}_-}d_j'{\bm d}'e^{ik_p{(x'-z')}\cdot{\bm d}}ds({\bm d})
+\frac{1}{\mu}\int_{\mathbb{S}_-}({d}')^{\perp}_j({\bm d}')^{\perp}e^{ik_s({x'-z'})\cdot{\bm d}}ds({\bm d})\right].
\end{align}
Here, $x'=(x_1,-x_2)$ for $x=(x_1,x_2)\in \R^2$. The notation ${d}'_j$ and $({d}')^{\perp}_j$ denote
the $j$-th component of ${\bm d}'$ and $({\bm d}')^{\perp}$, respectively, given by
\ben
{d}'_1=d_1,\quad {d}'_2=-d_2,\quad ({d}')^{\perp}_1=d_2,\quad ({d}')^{\perp}_2=d_1.
\enn
\end{theorem}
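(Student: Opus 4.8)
The plan is to realize the modified incident field $\bm{U}^{in}(x;z,\bm{e}_j)=\text{Im}\,(\Pi(x,z)\bm{e}_j)$ as a continuous superposition of the physical incident plane waves and then to exploit linearity of the scattering map together with the uniqueness in Theorem \ref{thmwp}. First I would note that, by (\ref{3}), the imaginary part of the Green's tensor is nonsingular, so $\bm{U}^{in}$ is an entire solution of the Navier equation (\ref{navier}) in $\R^2$ and is therefore an admissible incident field. Writing $e^{ik_p(x-z)\cdot\bm{d}}=e^{-ik_pz\cdot\bm{d}}\,e^{ik_px\cdot\bm{d}}$ in (\ref{impi}) and recalling $\bm{d}e^{ik_px\cdot\bm{d}}=\bm{u}^{in}_p(x;\bm{d})$ and $\bm{d}^\perp e^{ik_sx\cdot\bm{d}}=\bm{u}^{in}_s(x;\bm{d})$, formula (\ref{impi}) exhibits $\bm{U}^{in}$ as the integral over $\bm{d}\in\mathbb{S}$ of incident pressure waves with weight $d_je^{-ik_pz\cdot\bm{d}}/[8\pi(\lambda+2\mu)]$ plus incident shear waves with weight $d_j^\perp e^{-ik_sz\cdot\bm{d}}/(8\pi\mu)$.

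By linearity of Problem \ref{pb1}, the scattered field $\bm{U}^{sc}(x;z,\bm{e}_j)$ produced by $\bm{U}^{in}$ is then the same superposition of the individual scattered fields $\bm{u}^{sc}_p(x;\bm{d})$ and $\bm{u}^{sc}_s(x;\bm{d})$, $\bm{d}\in\mathbb{S}$. I would split the circle as $\mathbb{S}=\mathbb{S}_-\cup\overline{\mathbb{S}}_+$. Over $\mathbb{S}_-$ the scattered fields are exactly the measured data $\bm{u}^{sc}_{p,-}$ and $\bm{u}^{sc}_{s,-}$, yielding the first bracket on the right of (\ref{usc}). Over $\overline{\mathbb{S}}_+$ Lemma \ref{lemma1} gives the closed forms $\bm{u}^{sc}_{p,+}(x;\bm{d})=-\bm{d}e^{ik_px\cdot\bm{d}}$ and $\bm{u}^{sc}_{s,+}(x;\bm{d})=-\bm{d}^\perp e^{ik_sx\cdot\bm{d}}$; inserting these and applying the reflection $\bm{d}\mapsto\bm{d}'=(d_1,-d_2)$, which interchanges $\mathbb{S}_+$ and $\mathbb{S}_-$ and preserves arc length, I would use the identity $(x-z)\cdot\bm{d}'=(x'-z')\cdot\bm{d}$ with $x'=(x_1,-x_2)$ to recast the $\overline{\mathbb{S}}_+$ integrals as the $\mathbb{S}_-$ integrals in the primed directions that appear in the second bracket of (\ref{usc}). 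Tracking components under this reflection gives $d_j\bm{d}\mapsto d_j'\bm{d}'$ and $d_j^\perp\bm{d}^\perp\mapsto (d')^\perp_j(\bm{d}')^\perp$, matching the stated coordinates exactly.

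The delicate point is justifying the interchange in the second step, namely that integrating the family of scattered fields over $\bm{d}$ produces the scattered field associated with the integrated incident data. I would settle this by uniqueness: set $\bm{V}(x)$ equal to the right-hand side of (\ref{usc}) and verify directly that it solves Problem \ref{pb1} with $\bm{U}^{in}$ as incident field. Differentiation under the integral sign (legitimate because $\mathbb{S}$ is compact and, for $x$ in compact subsets of $\Omega$, the integrands and their $x$-derivatives are bounded and continuous in $(\bm{d},x)$) shows $\bm{V}$ satisfies (\ref{navier}); integrating the individual boundary relations $\bm{u}^{sc}_{p/s}(\cdot;\bm{d})=-\bm{u}^{in}_{p/s}(\cdot;\bm{d})$ on $\Gamma$ gives $\bm{V}=-\bm{U}^{in}$ on $\Gamma$; the uniform continuous dependence in Theorem \ref{thmwp} (uniform on $[C(\ov{\Omega}\setminus U_a)]^2$) furnishes the vertical growth bound after integration over the compact set $\mathbb{S}$; and integrating the UPRC representations (\ref{uprc}) of the individual scattered fields against the bounded $\bm{d}$-weights produces a UPRC representation for $\bm{V}$ with an $L^\infty(\Gamma_a)$ density. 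Having verified all four conditions of Problem \ref{pb1}, uniqueness in Theorem \ref{thmwp} forces $\bm{V}=\bm{U}^{sc}(\cdot;z,\bm{e}_j)$, which is precisely (\ref{usc}). I expect the genuinely technical issues to be the uniformity required for the growth condition and the measurability and boundedness in $\bm{d}$ of the UPRC densities $\phi(\cdot;\bm{d})$, both of which should follow from the quantitative bound $\|(\bm{I}+\bm{D}-i\eta\bm{S})^{-1}\|<\infty$ recorded before Theorem \ref{thmwp}.
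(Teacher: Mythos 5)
Your proposal is correct and follows essentially the same route as the paper's own proof: decompose ${\bm U}^{in}$ via (\ref{impi}) into a superposition of plane waves over $\mathbb{S}$, split the circle into $\mathbb{S}_-$ and $\overline{\mathbb{S}}_+$, apply Lemma \ref{lemma1} together with the reflection ${\bm d}\mapsto{\bm d}'$ (and the identity $(x-z)\cdot{\bm d}'=(x'-z')\cdot{\bm d}$) on the upper semicircle, and keep the measured fields ${\bm u}^{sc}_{p,-},{\bm u}^{sc}_{s,-}$ on the lower one. The only difference is that you explicitly justify the superposition step---that the scattered field of the integrated incident field is the integral of the scattered fields---by verifying the four conditions of Problem \ref{pb1} and invoking uniqueness, a point the paper passes over with the phrase ``by linear superposition''; this is a welcome tightening rather than a different approach (and, incidentally, your sign bookkeeping for the shear term is cleaner than the paper's intermediate formula for ${\bm U}^{sc}_{s,+}$, which drops a minus sign that reappears correctly in (\ref{usc})).
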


\begin{proof}
In view of (\ref{impi}), the incident field  ${\bm U}^{in}~({ x;z},{\bm e}_j)$ can be decomposed into
the sum of four parts:
\ben
{\bm U}^{in}=\frac{1}{8\pi}\left\{\frac{1}{\la+2\mu}({\bm U}^{in}_{p,-}+{\bm U}^{in}_{p,+})
+\frac{1}{\mu}({\bm U}^{in}_{s,+}+{\bm U}^{in}_{s,-})\right\},
\enn
where
\ben
{\bm U}^{in}_{p,\pm}({x;z},{\bm e}_j)
:=\int_{\mathbb{S}_\pm}{\bm u}^{in}_{p}(x;{\bm d})d_je^{-ik_pz\cdot {\bm d}}ds({\bm d})
=\int_{\mathbb{S}_\pm}{\bm d} d_je^{ik_p(x-z)\cdot {\bm d}}ds({\bm d}),\\
{\bm U}^{in}_{s,\pm}({x;z},{\bm e}_j):=\int_{\mathbb{S}_\pm}{\bm u}^{in}_{s}(x;{\bm d})
d_j^{\perp}e^{-ik_sz\cdot {\bm d}}ds({\bm d})
=\int_{\mathbb{S}_\pm}{\bm d}^{\perp} d_j^{\perp}e^{ik_s(x-z)\cdot {\bm d}}ds({\bm d}).
\enn
By linear superposition and Lemma \ref{lemma1}, the unique scattered field ${\bm U}^{sc}_{p,+}$
that corresponds to ${\bm U}^{in}_{p,+}$ is given by
\ben
{\bm U}^{sc}_{p,+}({x;z},{\bm e_j})
:=\int_{\mathbb{S}_+}{\bm u}^{sc}_{p,+}(x;{\bm d})d_je^{-ik_pz\cdot {\bm d}}ds({\bm d})
=-\int_{\mathbb{S}_+} {\bm d}\,d_j e^{ik_p(x-z)\cdot {\bm d}}ds({\bm d}),
\enn
implying that
\ben
{\bm U}^{sc}_{p,+}({x;z},{\bm e_j})=-{\bm U}^{in}_{p,+}({x;z},{\bm e_j})
=-\int_{\mathbb{S}_-}d_j'{\bm d}'e^{ik_p{ (x'-z')}\cdot {\bm d}}ds({\bm d}).
\enn
Analogously, we have
\ben
{\bm U}^{sc}_{s,+}({x;z},{\bm e}_j)=-{\bm U}^{in}_{s,+}({x;z},{\bm e_j})=
\int_{\mathbb{S}_-}({d}')^{\perp}_j({\bm d}')^{\perp}e^{ik_s({x'-z'})\cdot {\bm d}}ds({\bm d}) .
\enn
On the other hands, it is easy to see that the unique scattered fields incited by ${\bm U}^{in}_{p,-}$ and
${\bm U}^{in}_{s,-}$ can be expressed as
\ben
{\bm U}^{sc}_{p,-}({x;z},{\bm e_j})
&:=&\int_{\mathbb{S}_-}{\bm u}^{sc}_{p,-}(x;{\bm d})d_je^{-ik_pz\cdot {\bm d}}ds({\bm d}),\\
{\bm U}^{sc}_{s,-}({x;z},{\bm e_j})
&:=&\int_{\mathbb{S}_-}{\bm u}^{sc}_{s,-}(x;{\bm d})d_je^{-ik_sz\cdot {\bm d}}ds({\bm d}),
\enn
respectively. To sum up, we may rewrite the scattered field incited by ${\bm U}^{in}$ as
\ben
{\bm U}^{sc}=\frac{1}{8\pi}\left\{\frac{1}{\la+2\mu}({\bm U}^{sc}_{p,-}+{\bm U}^{sc}_{p,+})
+\frac{1}{\mu}({\bm U}^{sc}_{s,+}+{\bm U}^{sc}_{s,-})\right\},
\enn
which takes the same form as the right-hand side of (\ref{usc}). The proof is thus completed.
\end{proof}

From the proof of Theorem \ref{thmwp}, we can represent ${\bm U}^{sc}(x;z,{\bm e}_j)$ in (\ref{usc})
as the layer-potential
\be\label{eq1}
{\bm U}^{sc}(x;z,{\bm e}_j)=\int_{\G}\left\{{\bm P}_y[\Pi_h(x,y)]-i\eta{\bm\Pi}_{h}({x,y})\right\} {\bm\psi}^{(j)}_z({y})ds({y}),\quad x\in\Om,
\en
where the density function ${\bm\psi}_z^{(j)}$ is the unique solution to the integral equation
\ben
({\bm I}+{\bm D}-i\eta {\bm S}){\bm\psi}_z^{(j)}=-2{\bm G}^{(j)}_z\quad\mbox{on}\quad \Gamma,
\enn with
\ben
{\bm G}^{(j)}_z(x):=-{\bm U}^{in}(x;z,{\bm e}_j)=-\text{Im}~\Pi(x;z){\bm e}_j.
\enn
Here, we use the subscript $z$ to indicate the dependence of $\psi_z^{(j)}$ on the point $z$.
By (\ref{Pi}) and a straightforward calculation it follows that
\be\label{ImP}
{\rm Im}[\Pi_{j,k}(x,z)]=\frac{1}{4\mu}\left[F_1(|x-z|)\,\delta_{j,k}+F_2(|x-z|)\,
\frac{(x_j-z_j)(x_k-z_k)}{|x-z|^2}\right],
\en
where
\ben
F_1(t)&=&J_0(k_st)-\frac{1}{k_st}\left(J_1(k_st)-\frac{k_p}{k_s}\; J_1(k_pt)\right),\\
F_2(t)&=&\frac{2}{k_st} J_1(k_st)-J_0(k_st)-\frac{k_p}{t} J_1(k_pt)+\frac{k_p^2}{k_s^2} J_0(k_pt).
\enn

We remark that the Bessel functions have the following behavior \cite[Section 2.4]{colton-kress-2013}
(see also Figure \ref{fig10})
\ben
J_n(t)=\sum_{p=0}^\infty\frac{(-1)^p}{p!(n+p)!}\left(\frac{t}{2}\right)^{n+2p},\quad t\in\R.
\enn
For large arguments, it holds that
\ben
J_n(t)=\sqrt{\frac{2}{\pi t}}\cos\left(t-\frac{n\pi}{2}-\frac{\pi}{4}\right)\left\{1+O\left(\frac{1}{t}\right)\right\},
\quad t\rightarrow\infty.
\enn
\begin{figure}[htbp]
\centering
  \includegraphics[width=3in]{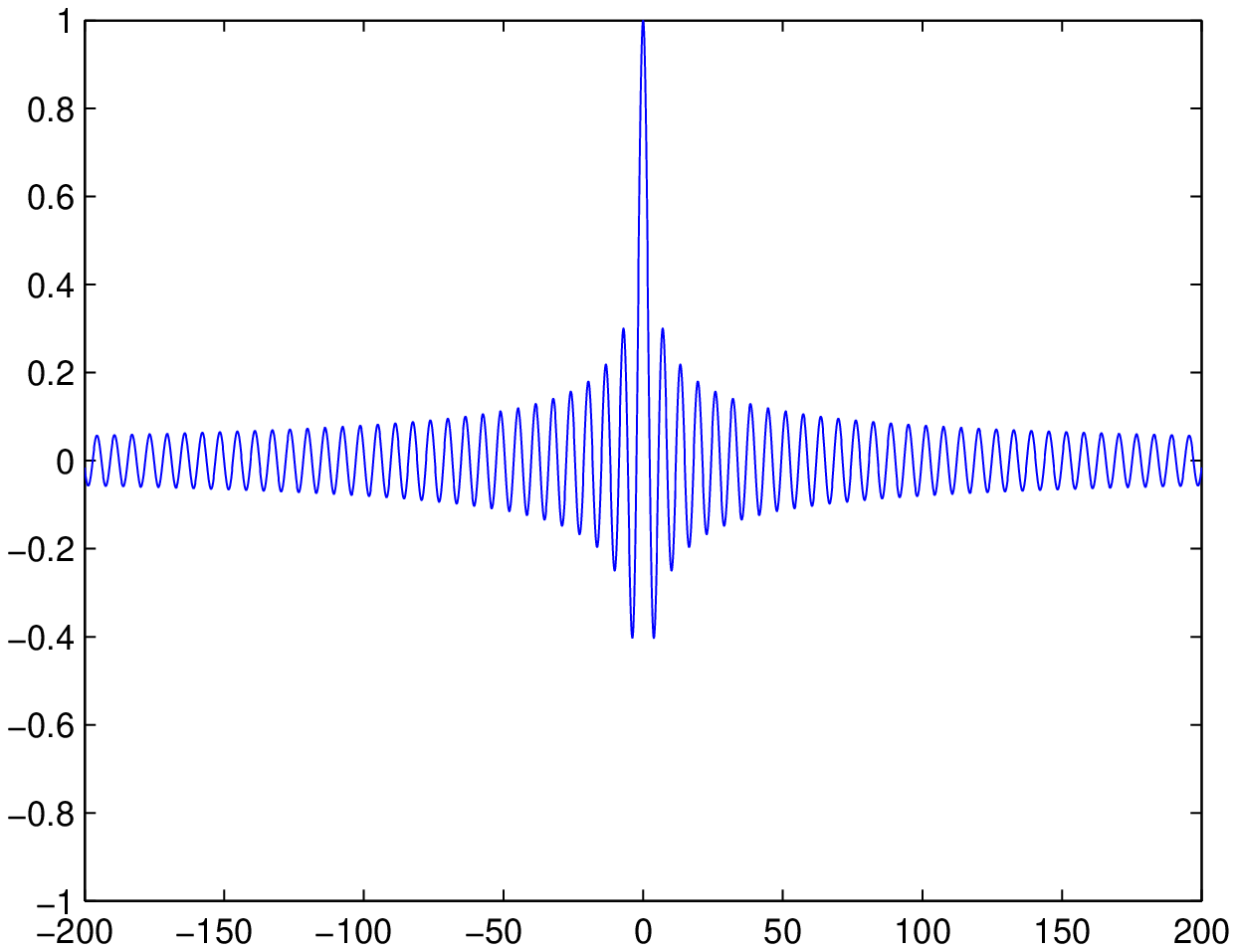}
  \includegraphics[width=3in]{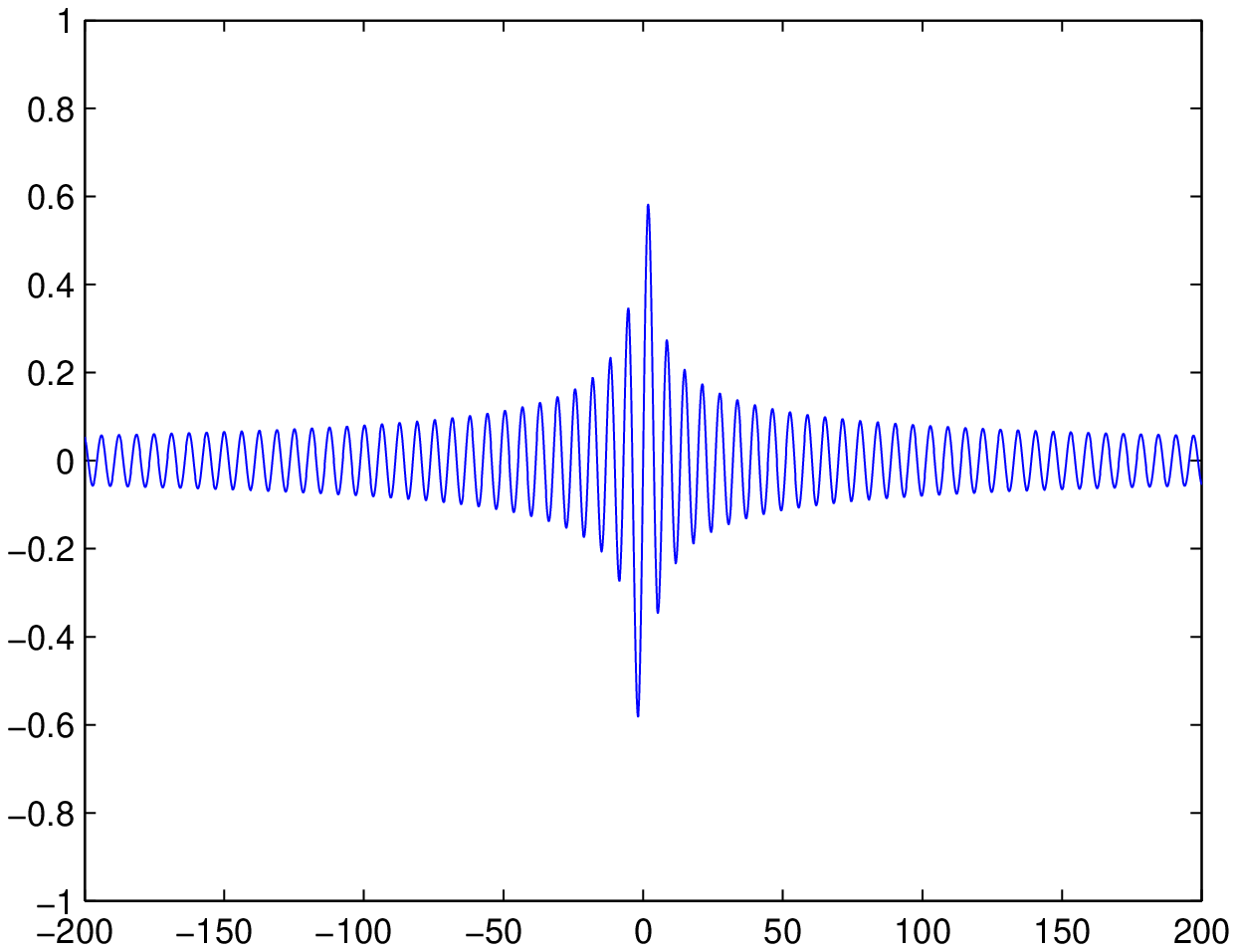}
\caption{The behavior of the Bessel functions $J_0$ (left) and $J_1$ (right).}
\label{fig10}
\end{figure}
Thus, from the expression of $\I[\Pi(x,z)]$ in (\ref{ImP}) we have the estimate
\ben
\max\limits_{j,k=1,2} |\I\Pi_{jk}({x,z})|=\left\{
\begin{array}{ll}
O(1) &\textrm{as}\;|x-z|\rightarrow 0,\\
O\left({|{ x-z}|^{-1/2}}\right)&\textrm{if}\;\;|{x-z}|\rightarrow \infty,
\end{array} \right.
\; x\in \Gamma.
\enn
Further, since ${\bm I}+{\bm D} - i \eta {\bm S}$ is bijective (and so boundedly invertible)
in $[BC(\Gamma)]^2$, it holds that
\be\label{phi}
C_1 \|{\bm G}^{(j)}_{z}\|_{\infty,\Gamma}
\leq \|{{\bm \psi}^{(j)}_{z}}\|_{\infty,\Gamma}
\leq C_2 \|{\bm G}^{(j)}_{z}\|_{\infty,\Gamma},\quad j=1,2
\en
for some positive constants $C_1$ and $C_2$. By (\ref{phi}) we have
\ben
\left\{\begin{array}{ll}
 \|{{\bm \psi}^{(j)}_{z}}\|_{\infty,\Gamma}
\geq C>0 &\textrm{if}\;\;\mbox{dist}(z, \G)\rightarrow 0,\\
 \|{{\bm \psi}^{(j)}_{z}}\|_{\infty,\Gamma}
=O\left({d({\bf z},\Gamma)^{-1/2}}\right)&\textrm{as}\;\mbox{dist}(z, \G)\rightarrow \infty,
\end{array} \right.\;j=1,2.
\enn
Combining the above estimate with (\ref{eq1}), we expect that the scattered field
${\bm U}^{sc}({x;z},{\bm e}_j)$ for $x\in\Omega$ takes a relatively large value when the
sampling point $z$ is getting close to the rough surface $\G$ and decays with the order
 $1/\mbox{dist}(z,\G)$ as ${z}$ moves away from rough surface $\G$.
Motivated by the above discussions, we propose the imaging function
\be\label{indicator}
I({z}):=\sum^{2}_{j=1}\int_{\G_{a}}\big|{\bm U}^{sc}({x;z},{\bm e}_j)\big|^2ds({x}),
\en
for some $a>f^+$, where $z\in\R^2$ is the sampling point in a searching region.
Analogously, it is reasonable to expect that the imaging function $I(z)$ decays as $z$ moves
away from the rough surface $\G$. Hence, $I(z)$ can regarded as an imaging function for recovering $\Gamma$.

In our numerical computations, the straight line $\G_a$ in (\ref{indicator}) is truncated by a finite line segment
${\G_{a,A}}:=\{x\in\Gamma_a~|~|x_1|<A\}$, which will be discretized uniformly into $2N$ subintervals with
the step size is $h=A/N.$ In addition, the lower-half circle $\mathbb{S}_-$
in (\ref{usc}) will also be uniformly discretized into $M$ grids with the step size $\Delta\theta=\pi/M$.
Then for each sampling point $z$ we get the discrete form of (\ref{indicator}) as follows:
\be\no
&&I_A(z)=\sum_{j=1}^{2}\left|h\sum_{i=0}^{2N}
\frac{\Delta\theta}{8\pi}\sum_{k=0}^{M}\left(
\frac{1}{\lambda+2\mu}{\bm u}^s_p({x_i},{\bm d}_k)d_je^{-ik_pz\cdot {\bm d}_k}
+\frac{1}{\mu}{\bm u}^s_s({x_j},{\bm d}_k)d_j^{\perp}e^{-ik_s{z}\cdot {\bm d}_k}\right.\right.\\ \label{eq188}
&&\qquad\qquad\qquad\qquad\left.\left.
-\frac{1}{\lambda+2\mu}(d_k)_j'{\bm d}_k'e^{ik_p{(x_j'-z')}\cdot {\bm d}_k}
-\frac{1}{\mu}(d_k')^{\perp}_j(d_k')^{\perp}e^{ik_s({x_j'-z'})\cdot {\bm d}_k}
\right)\right|^2.
\en
Here, the measurement positions are denoted by $x_j=(-A+jh,H),$ $j=0,1,...,2N,$
and the incident directions ${\bm d}_k=(\sin(-\pi+k\Delta\theta),\cos(-\pi+k\Delta\theta)),$ $k=0,1,\cdots,M.$

Let $K\subset \R^2$ be a sampling region which contains part of
the rough surface to be recovered. The direct imaging scheme (\ref{eq188}) can be implemented as follows.

\begin{algorithm}
\begin{enumerate}
\item Choose $\mathcal{T}_m$ to be a mesh of $K$
and choose $\Gamma_{a,A}$ ($a>f_+$) to be a straight line segment above the rough surface.
\item Collect the scattered near-field data
      ${\bm u}^{sc}_p({x_j};{\bm d}_k)$ and ${\bm u}^{sc}_s({x_j};{\bm d}_k)$ for $x_j\in\G_{a,A}$, $j=0,\cdots,2N$, corresponding to the incident plane waves
      ${\bm u}^{in}_p({x};{\bm d}_k)$ and ${\bm u}^{in}_s({x};{\bm d}_k)$ with $k=0,\cdots,M$, respectively.
\item For each sampling point $z\in\mathcal{T}_m$, compute the imaging function $I_A(z)$ in (\ref{eq188}).
\item Plot the imaging function $I_A(z)$ for $z\in\mathcal{T}_m$ , where the large values represent the part of the rough surface in the sampling region $K$.
\end{enumerate}
\end{algorithm}

\section{Numerical examples}\label{sec4}
\setcounter{equation}{0}

In this section, we present several numerical experiments to demonstrate the effectiveness of
our imaging method. Emphasis will be placed upon the sensitivity of our inversion scheme to
the parameters involved, such as incident frequencies, length and height of the measurement line segment,
noisy levels and polarization directions.
We use the Nystr\"{o}m method to solve the forward elastic scattering problem for a rigid rough
surface \cite{LSZR16,LLZZ}. The scattered near-field data will be polluted by
\ben
{\bm u}^{sc}_{\delta}(x)={\bm u}^{sc}(x)+\delta(\zeta_1+i\zeta_2)\max_x|{\bm u}^{sc}(x)|,
\enn
where $\delta$ is the noise ratio and $\zeta_1,\zeta_2$ are standard normal distributions.
In all examples, we choose $N=200$ and $M=256$. The sampling region will be set to be a rectangular domain
and the Lam\'e constants are taken as $\mu=1, \lambda=1$.
In each figure, we use a solid line to represent the actual
rough surface against the reconstructed one.

In the first example, the rough surface is given by the function (see Figure \ref{fig2}):
$$
\;\;f_1(x_1) = \left\{
\begin{aligned}
&0.42-0.1\cos(0.75x_1)-0.05\cos(7x_1) &\quad x_1<4,\\
&0.55 &\quad \text{else}.
\end{aligned}
\right.
$$
The incident plane waves are incited at different frequencies and
the scattered near-field data are measured on $\Gamma_{a,A}$ with $a=2,A=8$ for each frequency.
Figure \ref{fig2} presents the reconstructed surfaces from the data with $20\%$ noise at
the frequencies $\om=15,20,25$, respectively.
It can be seen that the macro-scale features of the rough surface
are captured with a higher frequency $\om=25$ and the whole rough surface is accurately
recovered with a lower frequency $\om=15$.

\begin{figure}[htbp]
  \centering
  \subfigure[\textbf{\bm$\om=15$}]{\includegraphics[width=2in]{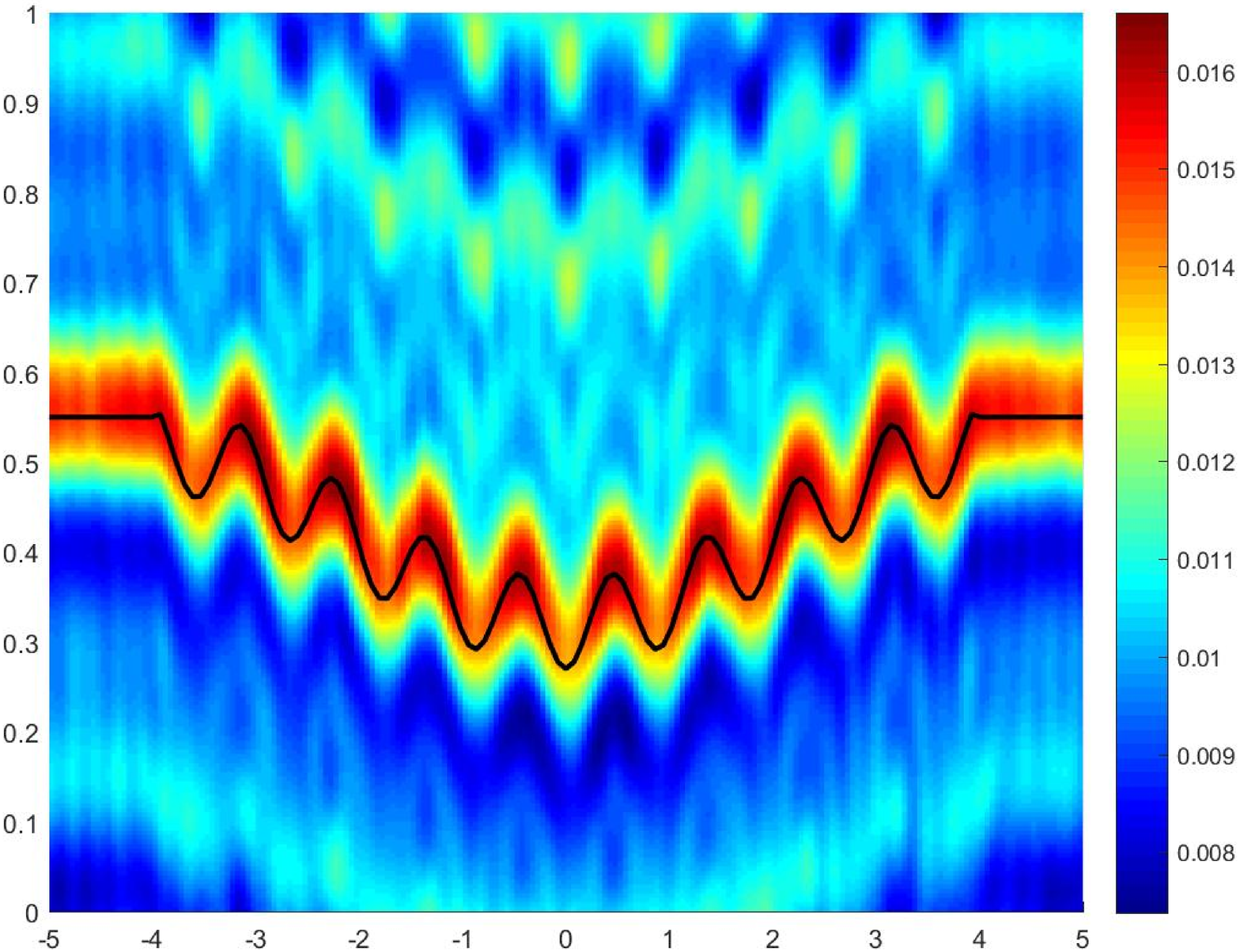}}
  \subfigure[\textbf{\bm$\om=20$}]{\includegraphics[width=2in]{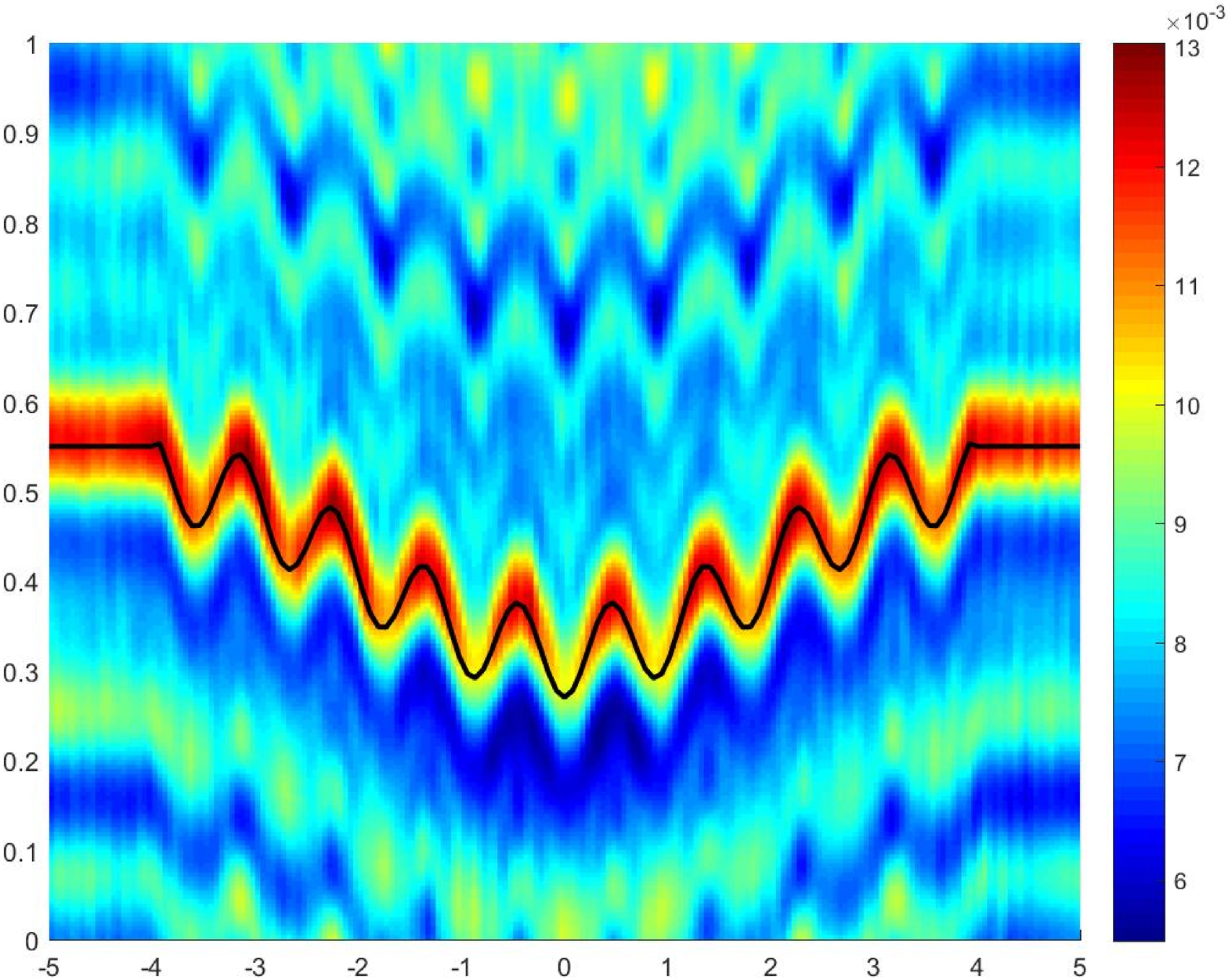}}
  \subfigure[\textbf{\bm$\om=25$}]{\includegraphics[width=2in]{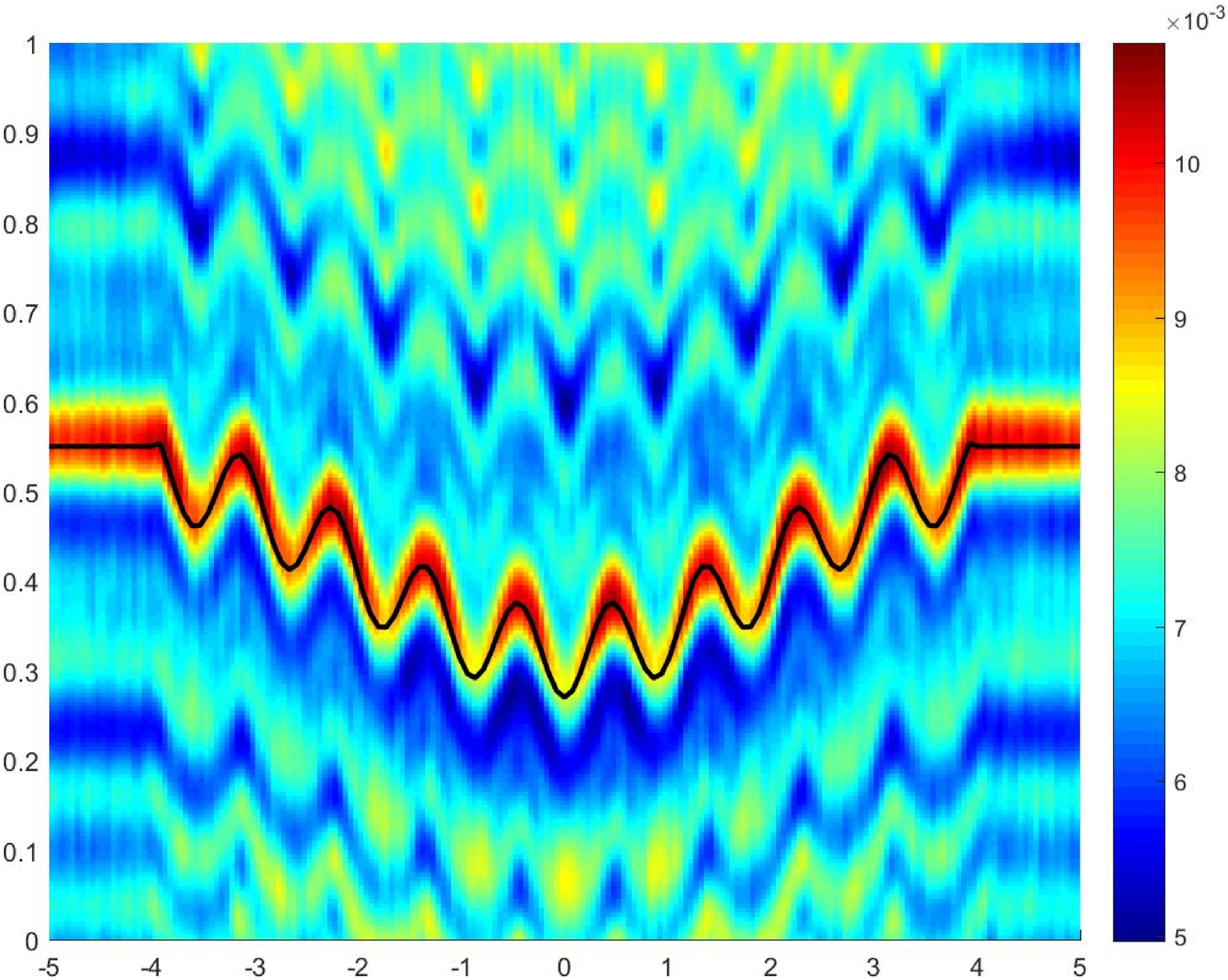}}
\caption{Reconstruction of a rough surface at different frequencies. }
\label{fig2}
\end{figure}

 In the second example, we use the data measured at $\Gamma_{a,A}$ with different parameters $a$ nd $A$.
 Note that $a>f_+$ and $2A>0$ denote respectively the hight and length of the line segment $\Gamma_{a,A}$.
 We fix the incident frequency to be $\om=20$ and the noise level to be 20\%.
 In Figure \ref{fig3}, the original rough surface is parameterized by
\ben
\;\;f_2(x_1)= 0.5+0.14\sin(0.7\pi (x_1+0.6)),
\enn
and the scattered near-field data are measured on $\G_{a,A}$ with $A=8$ and
$a=1.1, 2.0, 2.9$, respectively.
In Figure \ref{fig4},
the rough surface  takes the form
\ben
\;\;f_3(x_1)= 0.5+0.16\sin(\pi x_1)+0.1\sin(0.5\pi x_1),
\enn
with the scattered near-field data taken on $\G_{a,A}$ with
$a=2$ and $A=5,8,11$, respectively.
 One may conclude from Figures \ref{fig3} and \ref{fig4} that
the resolution is getting better if
the measurement surface $\G_{a,A}$ is getting closer to the rough surface or the length of $\G_{a,A}$ is getting longer.

\begin{figure}[htbp]
\centering
\subfigure[\textbf{$\{(x_1,1.1)~|~|x_1|<8)\}$}]{\includegraphics[width=2in]{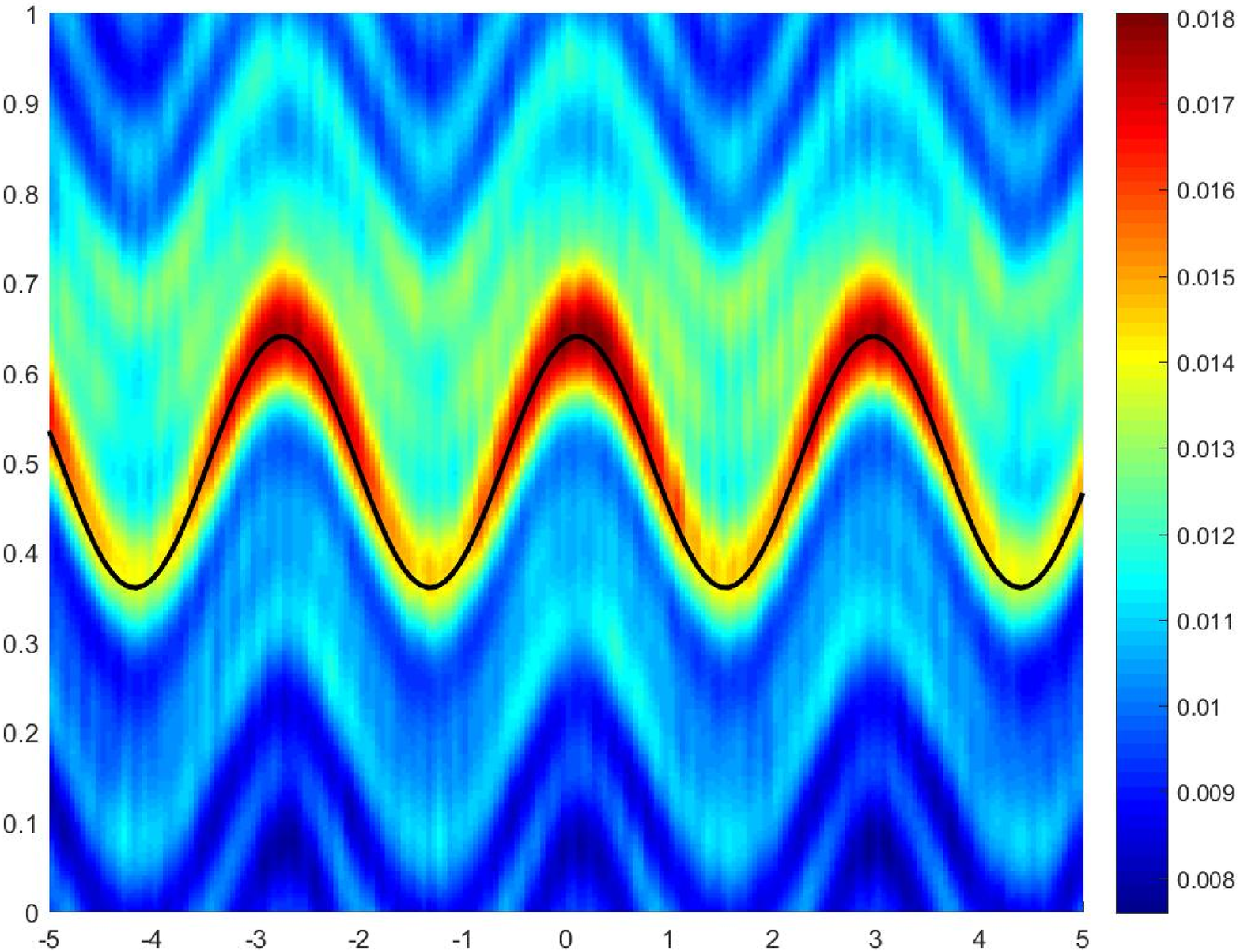}}
\subfigure[\textbf{$\{(x_1,2)~|~|x_1|<8)\}$}]{\includegraphics[width=2in]{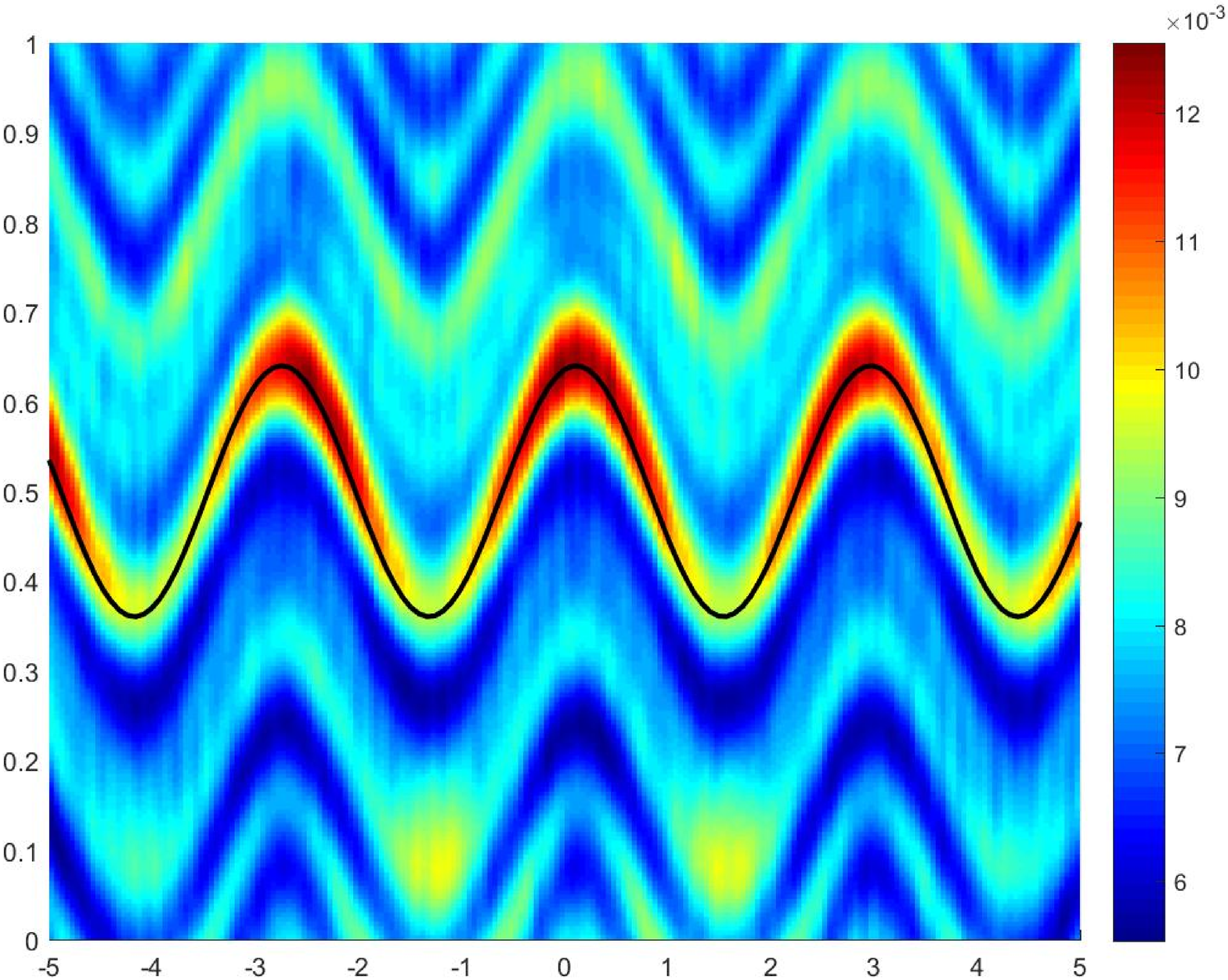}}
\subfigure[\textbf{$\{(x_1,2.9)~|~|x_1|<8)\}$}]{\includegraphics[width=2in]{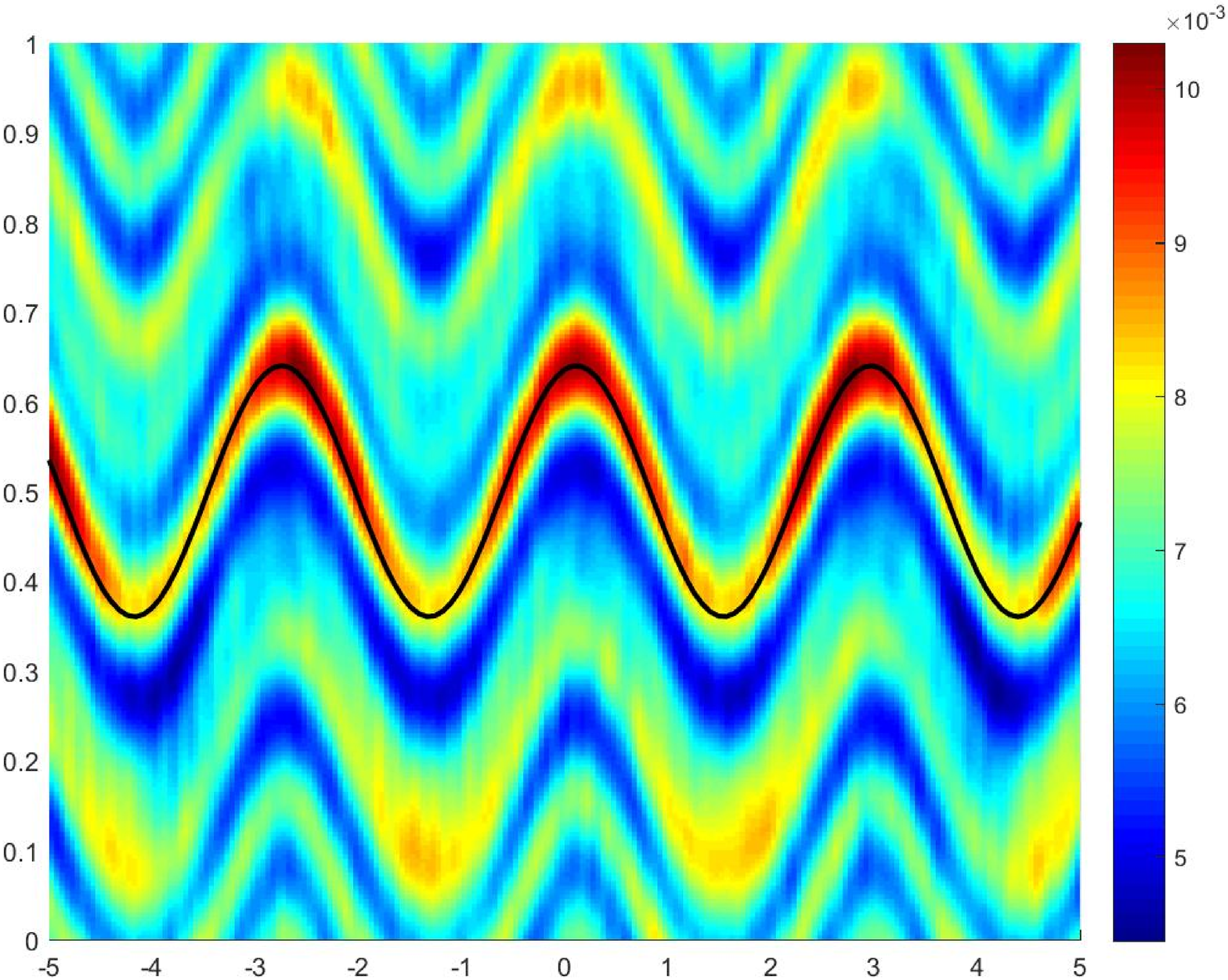}}
\caption{Reconstruction of a rough surface at different measurement heights.}
\label{fig3}
\end{figure}

\begin{figure}[htbp]
  \centering
\subfigure[\textbf{$\{(x_1,2)~|~|x_1|<5)\}$}]{\includegraphics[width=2in]{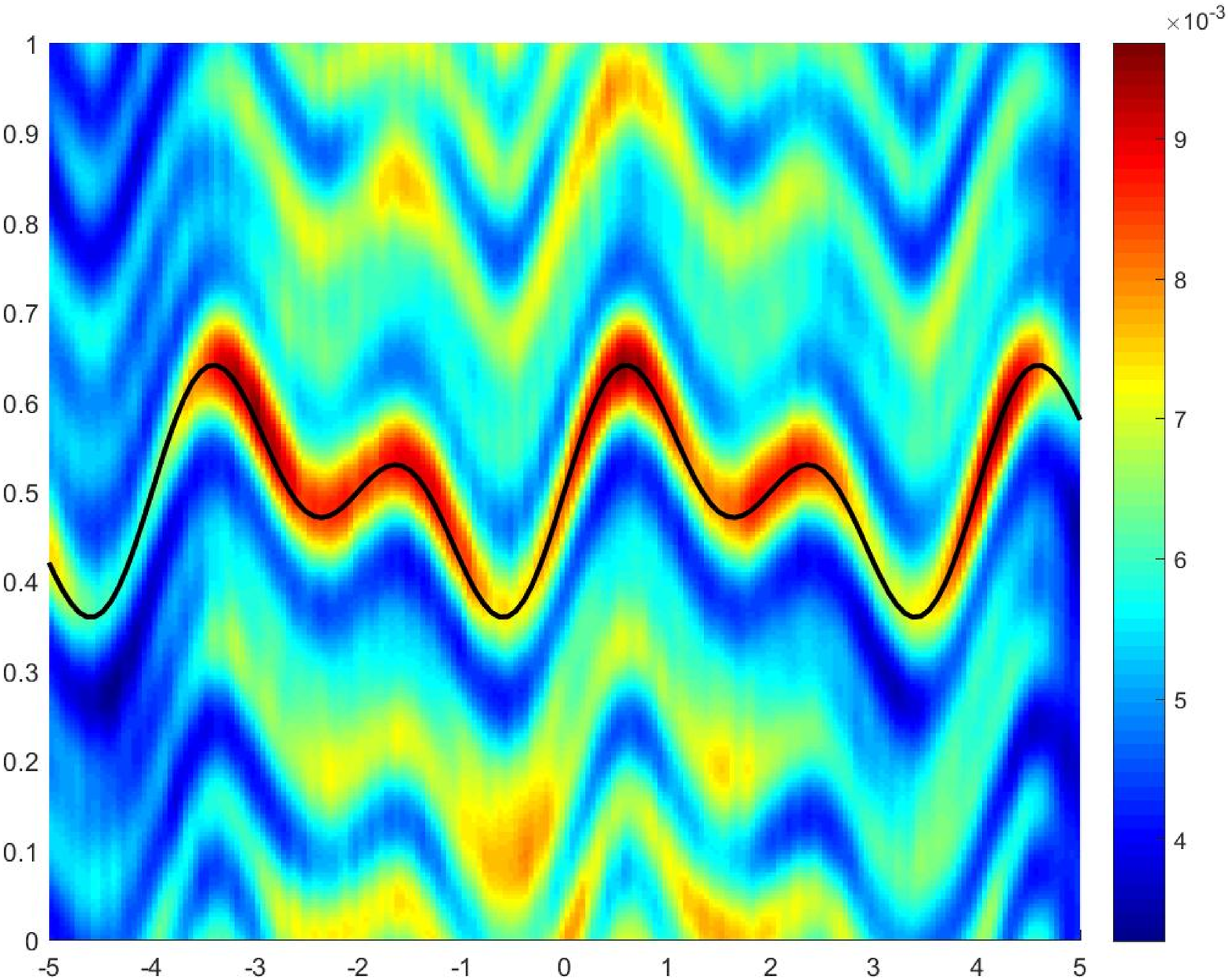}}
\subfigure[\textbf{$\{(x_1,2)~|~|x_1|<8)\}$}]{\includegraphics[width=2in]{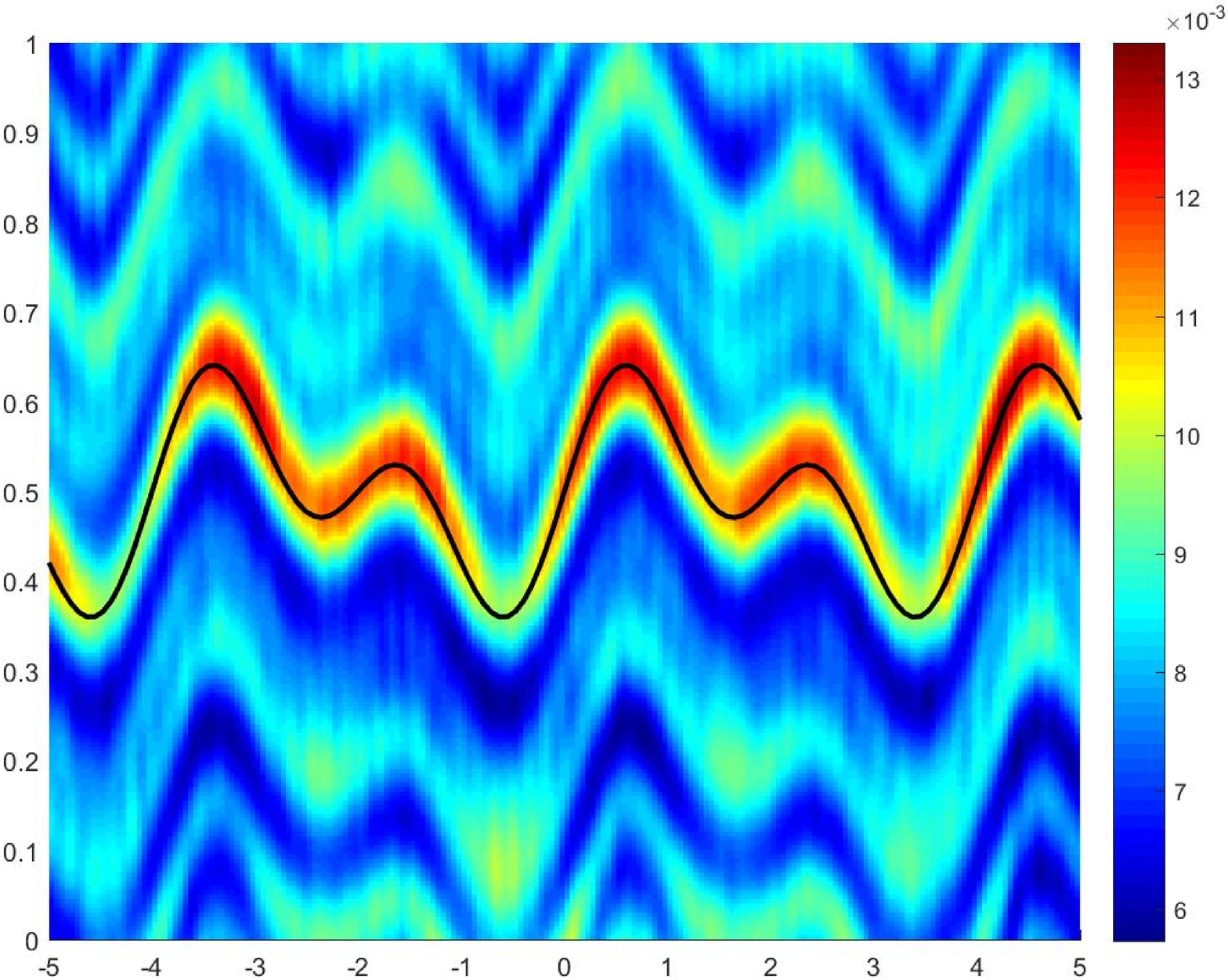}}
\subfigure[\textbf{$\{(x_1,2)~|~|x_1|<11)\}$}]{\includegraphics[width=2in]{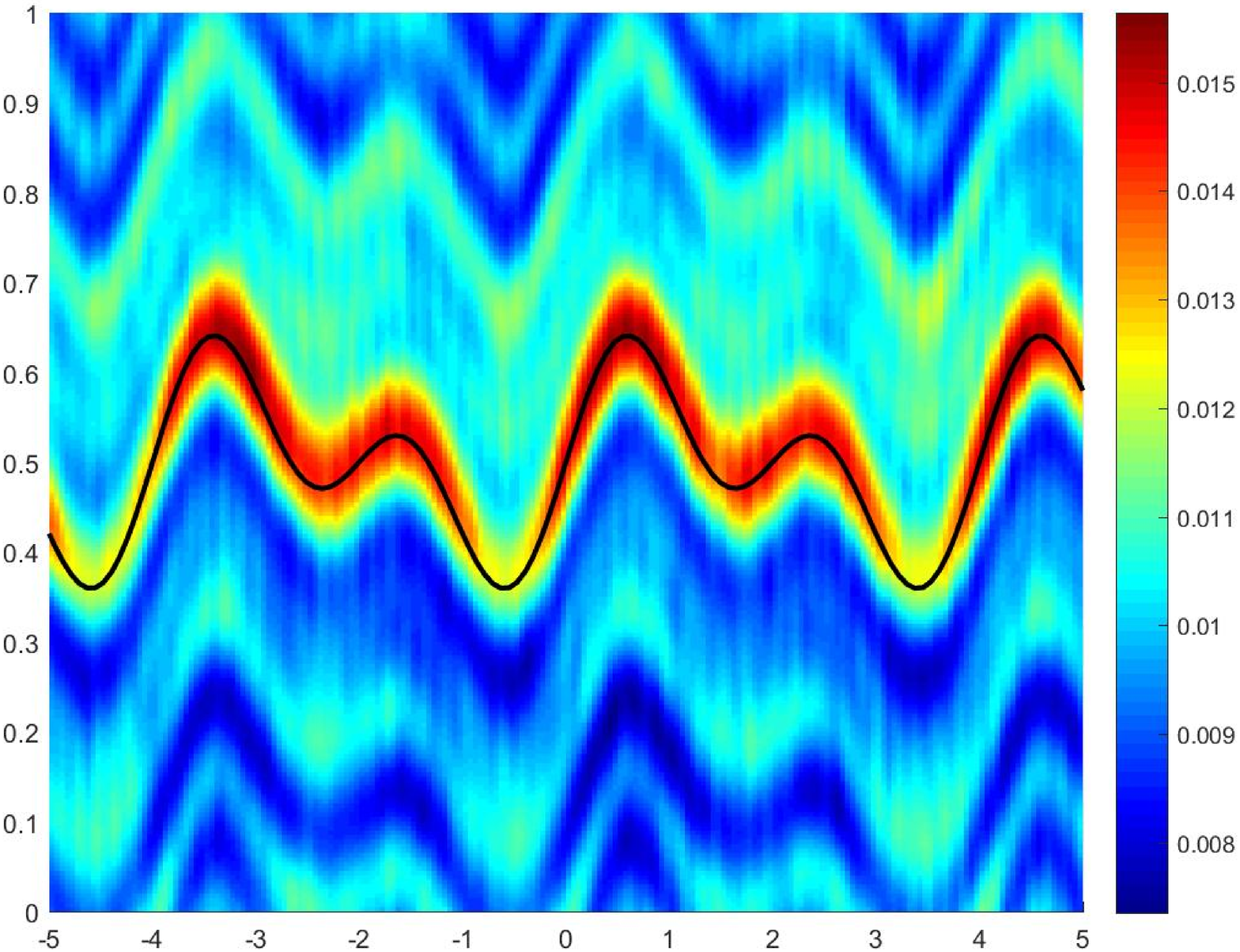}}
\caption{Reconstruction of a rough surface with different lengths of measured places.}
\label{fig4}
\end{figure}

In the third example, we make use of polluted data at different noise levels.
The scattered near-field data are taken on $\Gamma_{a,A}$ with $h=2,A=8$. The rough surface $\G_4$ is given by
\ben
\;\;f_4(x_1)= 0.5+0.084\sin(0.6\pi x_1)+0.084\sin(0.48\pi x_1)+0.03\sin(1.5\pi (x_1-1)).
\enn
It is shown Figure \ref{fig1} that the proposed scheme is robust to noise, even at the level of 40\% noise.

\begin{figure}[htbp]
  \centering
  \subfigure[\textbf{No noise}]{\includegraphics[width=2in]{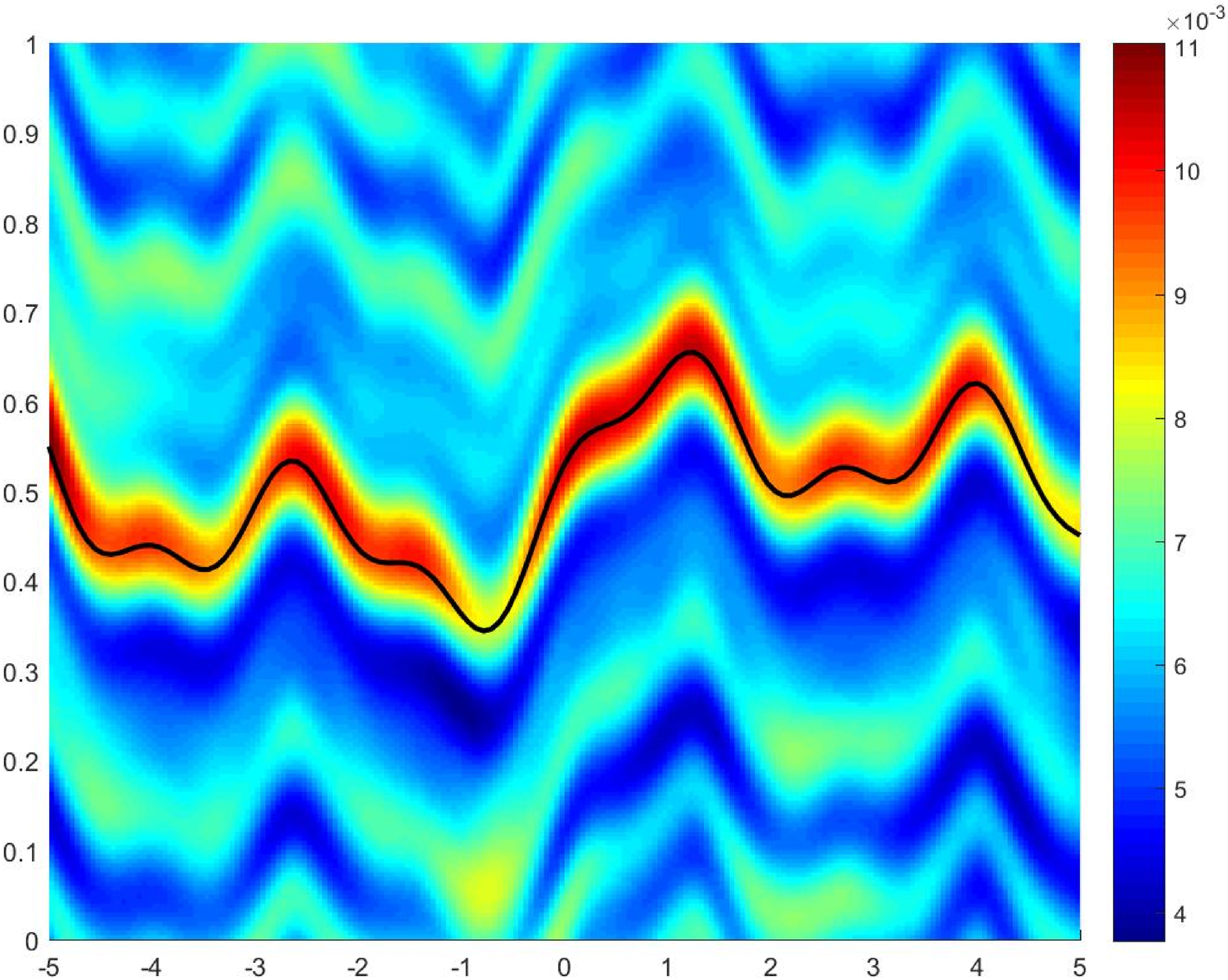}}
  \subfigure[\textbf{20\% noise}]{\includegraphics[width=2in]{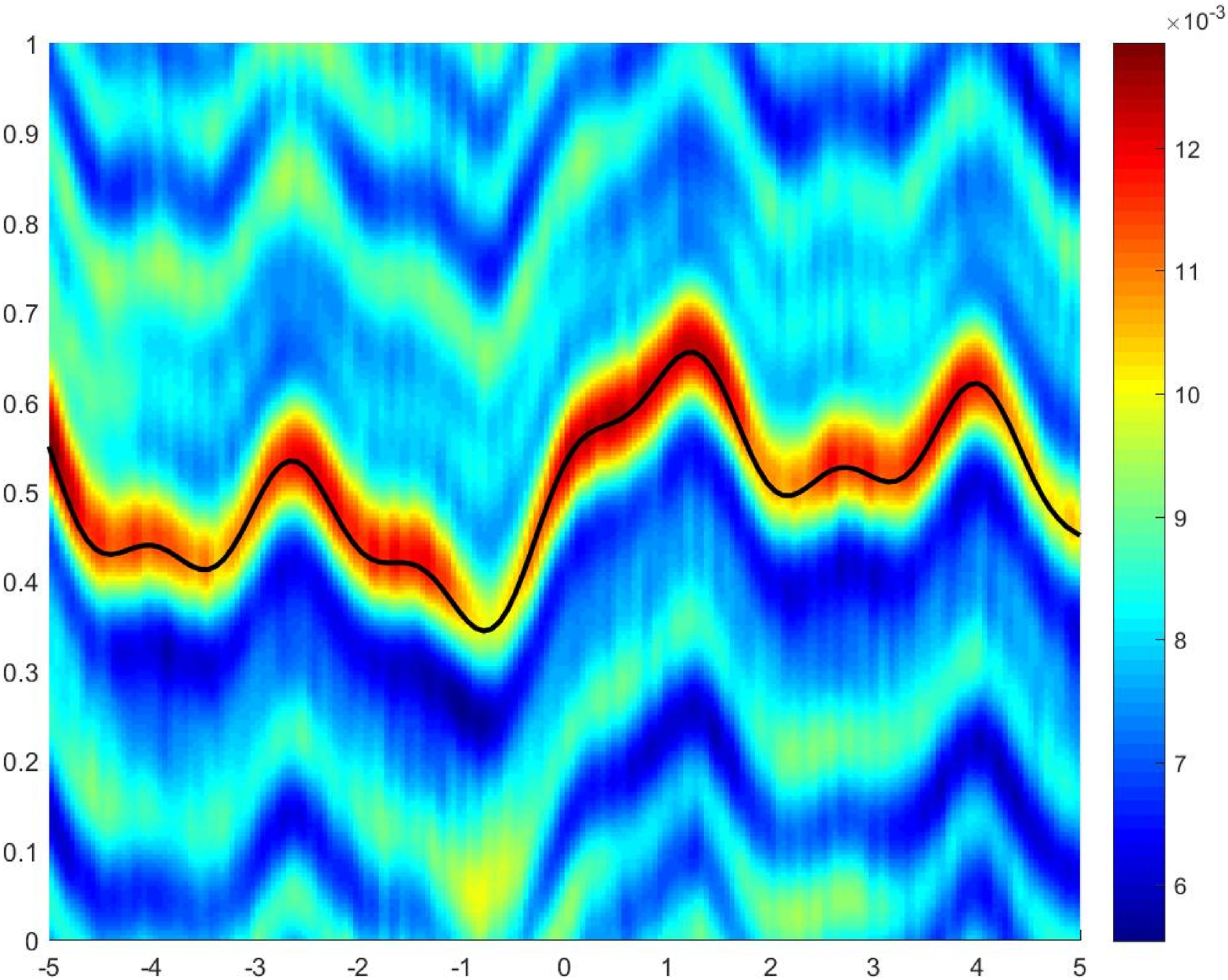}}
  \subfigure[\textbf{40\% noise}]{\includegraphics[width=2in]{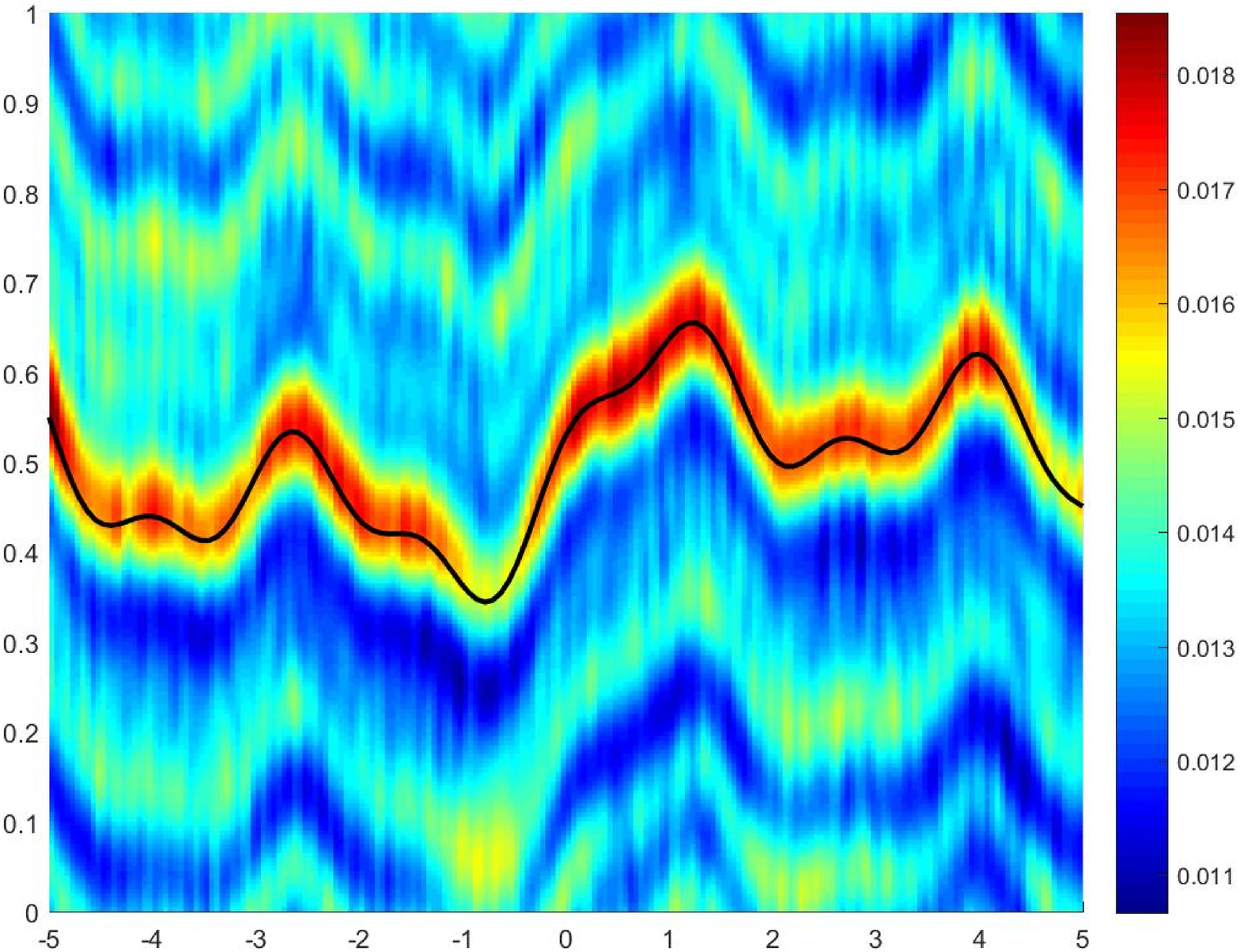}}
\caption{Reconstruction of a rough surface from data at different noise levels. }
\label{fig1}
\end{figure}

Finally, we compare the reconstructed solutions using different polarization directions ${\bm e}_j$ ($j=1,2$)
appeared in (\ref{uin}). Figure \ref{fig5} shows the results using
(a) the polarization ${p_1}= {\bm e}_1$;
(b) the polarization ${p_2}= {\bm e}_2$;
(c) the indicator function (\ref{eq188}).
The data are measured on $\Gamma_{a,A}$ with $a=2,A=8$.
We conclude that polarization $p_1$ can capture the rough surface accurately
but with some sidelobes, while the polarization $p_2$ can only find the convex part of the surface
but with few sidelobes. If we combine them together, that is, using our imaging function (\ref{indicator}),
the result can be improved since it inherits the advantages of the two polarizations.

\begin{figure}[htbp]
  \centering
  \subfigure[\textbf{$p_1={\bm e_1}.$ }]{
    \includegraphics[width=2in]{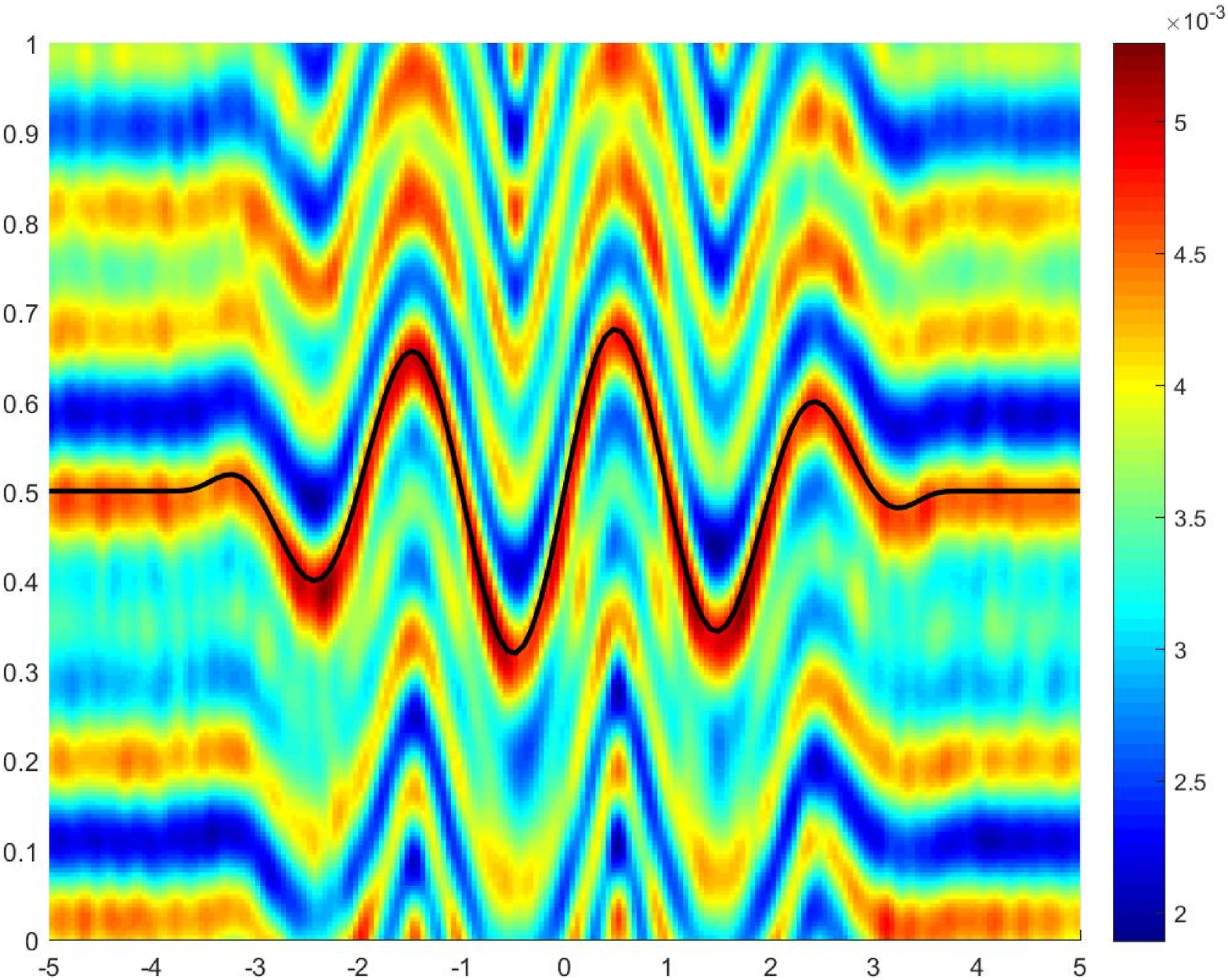}}
  \subfigure[\textbf{$p_2={\bm e_2}.$ }]{
    \includegraphics[width=2in]{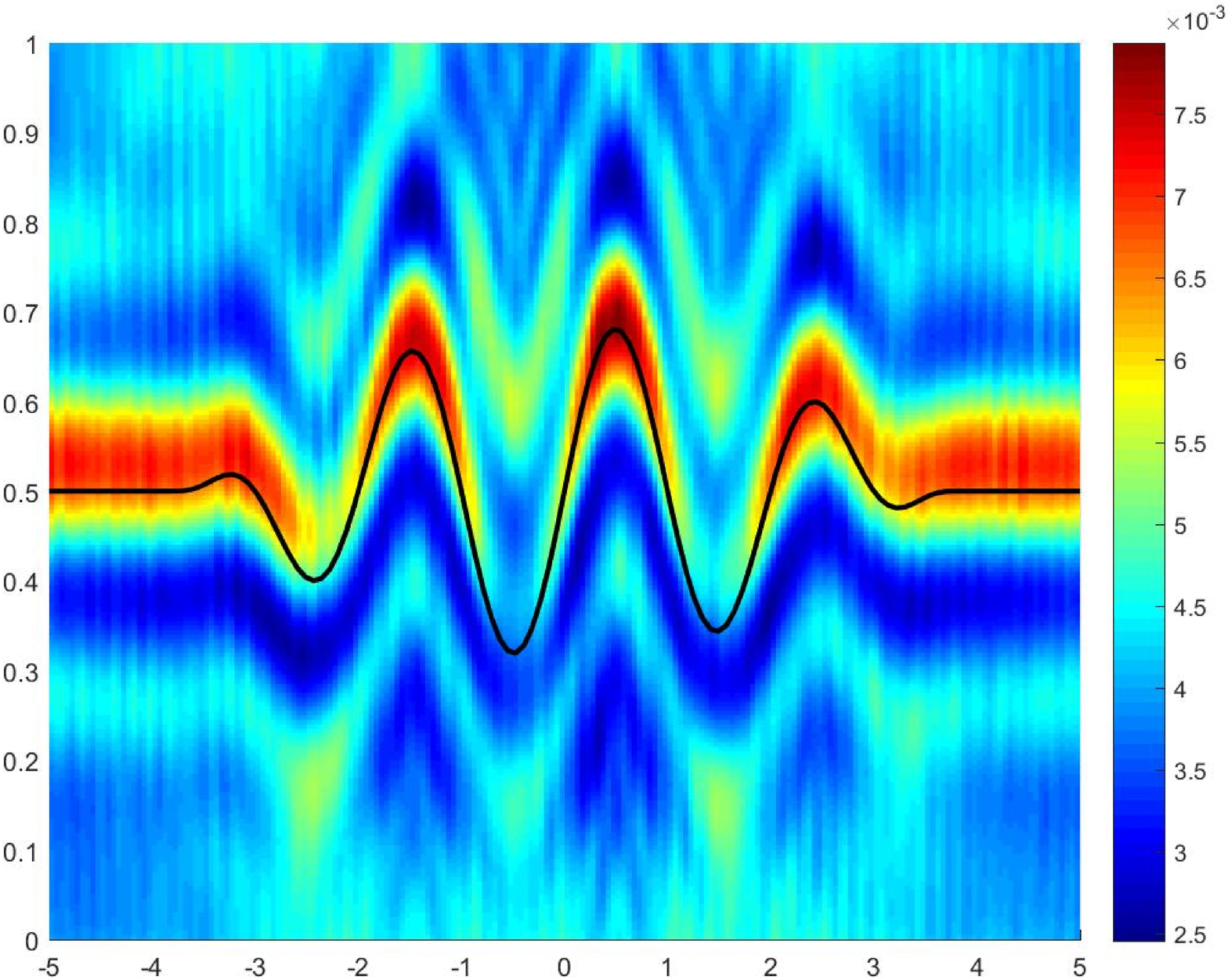}}
  \subfigure[\textbf{$p_1+p_2$ }]{
    \includegraphics[width=2in]{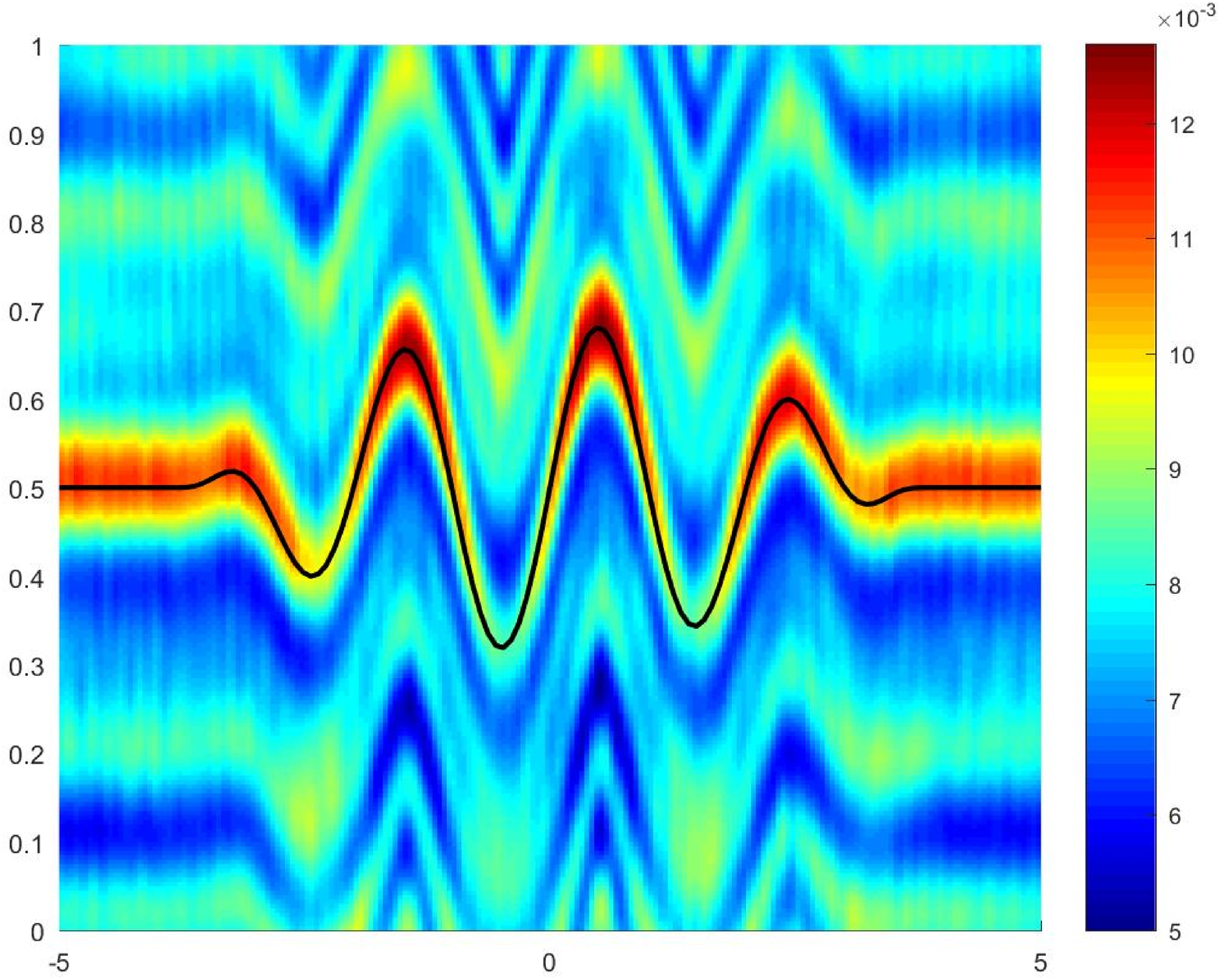}}
\caption{
Imaging results with different polarizations. The Figure (c) (right) is obtained through our indicator function for which the polarization directions ${\bm e}_1$ and ${\bm e}_2$ are both used.
}\label{fig5}
\end{figure}

The above numerical examples illustrate that the direct imaging method gives an accurate and stable reconstruction
of the unbounded rigid rough surface. In particular, the imaging algorithm is very robust to noise data.

\section{Conclusion}

We proposed a direct imaging method for recovering unbounded rigid rough surfaces from near-field data in linear
elasticity. Thanks to the Funk-Hecke formula and the free-field Green's tensor for the Navier equation,
we proposed the imaging function to reconstruct an unbounded rigid rough surface. The imaging function can be
easily implemented since only the calculation of inner products is involved.
Numerical experiments have been carried out to show that the reconstructions are accurate and robust to noise.
Further, the direct imaging algorithm could be extended to many other cases such as inverse
electromagnetic scattering problems by unbounded rough surfaces. Progress in these directions will be reported in a forthcoming paper.

\section*{Acknowledgements}

This work is partly supported by the NSFC of China grants 91630309, 11501558, 11571355 and 11671028,
the NSAF grant U1530401.

%\newpage

\end{document}